\documentclass[11pt]{amsart} 
  \usepackage[margin=3.8cm]{geometry}
\numberwithin{equation}{section}
\usepackage{amsmath,amsfonts,amsthm,amssymb,amscd, verbatim,microtype,graphicx,color,multirow,booktabs, caption,tikz,tikz-cd,mathdots,bm,mathrsfs}
\usepackage{tikz-cd}
 \usepackage[pagebackref]{hyperref} 
\usetikzlibrary{positioning} 
\usepackage{mathtools}
\usepackage{etoolbox}
\usepackage{dynkin-diagrams}

\newtheorem{theorem}{Theorem}[section]
\newtheorem{lemma}[theorem]{Lemma}
\newtheorem{proposition}[theorem]{Proposition}
 \newtheorem{corollary}[theorem]{Corollary}
   
    \newtheorem{conjecture}[theorem]{Conjecture}

      \theoremstyle{definition}
     
     \newtheorem{example}[theorem]{Example}
     
     \theoremstyle{remark}
     \newtheorem{remark}[theorem]{Remark}

\newcommand{\Sym}{\mathop{\mathrm{Sym}}}
\newcommand{\Alt}{\mathop{\mathrm{Alt}}}
\newcommand{\Aut}{\mathop{\mathrm{Aut}}}

\DeclareMathOperator{\GL}{GL}
\DeclareMathOperator{\SL}{SL}
\newcommand{\AGL}{\mathop{\mathrm{AGL}}}

\newcommand{\PSU}{\mathop{\mathrm{PSU}}}
\newcommand{\PSp}{\mathop{\mathrm{PSp}}}

\newcommand{\Fi}{\mathop{\mathrm{Fi}}}

\newcommand{\Out}{\mathop{\mathrm{Out}}}

\newcommand{\dll}{\mathrm{dl^*}}
\newcommand{\dl}{\mathrm{dl}}
\newcommand{\dllMax}{\mathrm{dl^*Max}}

\DeclareMathOperator{\diag}{diag}

\newcommand{\boldL}{\mathbf{L}}
\DeclareMathOperator{\GammaL}{\Gamma L}
\DeclareMathOperator{\PGammaL}{P \Gamma L}

\DeclareMathOperator{\PSL}{PSL}
\let\L\relax
\DeclareMathOperator{\L}{L}

\newcommand{\boldU}{\mathbf{U}}
\DeclareMathOperator{\GU}{GU}
\DeclareMathOperator{\SU}{SU}

\newcommand{\boldS}{\mathbf{S}}

\DeclareMathOperator{\Sp}{Sp}

\newcommand{\boldO}{\mathbf{O}}

\DeclareMathOperator{\GammaO}{\Gamma O}
\DeclareMathOperator{\POmega}{P \Omega}
\let\O\relax
\DeclareMathOperator{\O}{O}

\newcommand{\Atlas}{\mathbb{A}\mathbb{T}\mathbb{L}\mathbb{A}\mathbb{S}}

  \definecolor{mycolor}{rgb}{0.0,0.0,0.7}
  \definecolor{myred}{rgb}{0.75,0.0,0.16} 
  \definecolor{mygreen}{rgb}{0.0,0.4,0.16} 
  \definecolor{myviolet}{rgb}{1,0,1} 
   \definecolor{mypink}{rgb}{0.9,0,0.5}

	\newcommand{\MYhref}[3][black]{\href{#2}{\color{#1}{#3}}}%

\hypersetup{
colorlinks=true,
linkcolor=true,
linktocpage=true,
pageanchor=true,
hyperindex=true
}

 \AtBeginDocument{
     \hypersetup{
  linkcolor=mycolor,
  urlcolor=mypink,
citecolor=mygreen
}
     }

\makeatletter
\@namedef{subjclassname@2020}{%
  \textup{2020} Mathematics Subject Classification}
\makeatother

\subjclass[2020]{Primary: 20D05, 20B15} 
\keywords{Maximal subgroups, almost simple groups, perfect groups} 

\thanks{This work was supported by the Additional Funding Programme for Mathematical Sciences, delivered by EPSRC (EP/V521917/1) and the Heilbronn Institute for Mathematical Research.}

\author[Patricia Medina Capilla]{Patricia Medina Capilla}
\address{\parbox{\linewidth}{Patricia Medina Capilla, Mathematics Institute, University of Warwick\\
Coventry CV4\,7AL, United Kingdom \vspace{0.1cm}}}
\email{patricia.medina-capilla@warwick.ac.uk} 

\author[Luca Sabatini]{Luca Sabatini}
\address{\parbox{\linewidth}{Luca Sabatini, Mathematics Institute, University of Warwick\\
Coventry CV4\,7AL, United Kingdom \vspace{0.1cm}}}
\email{luca.sabatini@warwick.ac.uk, sabatini.math@gmail.com} 

\begin{document} 
\title[Maximal subgroups of almost simple and perfect groups]{On the maximal subgroups of almost simple\\and primitive perfect groups}

\maketitle 

\begin{abstract} 
We prove that, if $G$ is a finite almost simple group and $H$ is a maximal subgroup of $G$, then the $10$th term of the derived series of $H$ is perfect.
The same is true if $G$ is perfect and $H$ is core-free.
The constant $10$ is best possible.
\end{abstract}

\vspace{0.5cm}
\section{Introduction} 

One of the major areas of research in finite group theory is the investigation of the maximal subgroups of the
finite simple groups \cite{KL90}.
When possible, it is desirable to have results that are not in the form of a classification, but crisp statements capturing uniform structural constraints.
It is interesting that this can be done with respect to some natural parameter:
in 2013, Burness, Liebeck and Shalev \cite{BLS13} proved that every maximal subgroup of every finite simple group can be generated by at most $4$ elements.
The same result for almost simple groups (with $5$ in place of $4$)
was proven by Lucchini, Marion and Tracey in 2020 \cite{LMT20}.
A different type of constraint was noted by the second author in \cite{Sab26}.
He observed that, if $G$ is an almost simple group with a solvable maximal subgroup $H$, then the derived length of $H$ is at most $10$ \cite[Th.~1.4]{Sab26}.
This is sharp if $H$ is a solvable maximal subgroup of the sporadic Fischer group $\mathrm{Fi}_{23}$.

In this paper, we show that the latter result is just a glimpse of a much more general phenomenon, which, similarly to the theorems in \cite{BLS13,LMT20}, covers all maximal subgroups of all almost simple groups.
In fact, we prove that all maximal subgroups are nearly perfect,
in the sense that their derived series stabilize in a bounded number of steps.
We write $H^{(n)}$ for the $n$-th term of the derived series of $H=H^{(0)}$.

\begin{theorem} \label{thMain}
If $G$ is an almost simple group and $H$ is a maximal subgroup of $G$,
then $H^{(10)}$ is perfect.
In addition, $H^{(9)}$ is perfect unless $H$ is a solvable maximal subgroup of the Fischer group $\mathrm{Fi}_{23}$.
\end{theorem}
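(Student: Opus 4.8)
The plan is to reduce the theorem to a statement about maximal subgroups of the socle $S=\soc(G)$ and then to verify that statement by running through the families of finite simple groups supplied by the CFSG. First note that $H^{(10)}$ is perfect exactly when the derived series of $H$ has become stationary by its $10$th term; write $H^{(\infty)}=\bigcap_{n}H^{(n)}$ for its (perfect) terminal term and let $\dll(H)$ be the least $n$ with $H^{(n)}=H^{(\infty)}$, so that $\dll(H)=\dl(H/H^{(\infty)})$ and the theorem reads: $\dll(H)\le 10$ always, and $\dll(H)\le 9$ unless $H$ is the solvable maximal subgroup of $\Fi_{23}$.

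First I would reduce to the socle. If $S\le H$, then $H/S$ is a solvable subgroup of $\Out(S)$ by Schreier's conjecture, and since $S$ is perfect we get $H^{(\infty)}=S$ and $\dll(H)=\dl(H/S)\le\dl(\Out S)$. Otherwise $HS=G$ by maximality, so $H/(H\cap S)\cong G/S\le\Out(S)$ is solvable; writing $M:=H\cap S\trianglelefteq H$, a short argument with the derived series (the image of a perfect group is perfect, and subgroups and quotients of solvable groups are solvable) shows $H^{(\infty)}=M^{(\infty)}$ and
\[
\dll(H)\ \le\ \dl(H/M)+\dll(M)\ \le\ \dl(\Out S)+\dll(M).
\]
An inspection of the outer automorphism groups of the finite simple groups gives $\dl(\Out S)\le 3$ in all cases --- with $3$ attained only via the triality contribution when $S=\POmega_8^+(q)$ and $q$ is odd, and $\dl(\Out S)\le 2$ otherwise --- so it remains to bound $\dll(M)$, where $M$ ranges over the maximal subgroups of $S$ and over the ``novelty'' subgroups $H\cap S$ appearing in the classification of maximal subgroups of almost simple groups.

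The core of the work, and what I expect to be the laborious part, is bounding $\dll(M)$ from the known subgroup structure of $S$. The recurring mechanism is that such an $M$ has a normal subgroup $N$ that is either nilpotent of bounded class (hence of bounded derived length) or a central/direct product of quasisimple groups, with $M/N$ of bounded derived length; in the generic case the quasisimple factors are genuinely perfect, so $M$ is perfect or nearly so and $\dll(M)$ is small. Concretely: if $S=A_n$ then by the O'Nan--Scott theorem $M$ is intransitive, imprimitive, or primitive of affine, diagonal, product or almost simple type, and in each case $M^{(\infty)}$ is a direct product of alternating or quasisimple groups, possibly extended by an alternating group or of the shape $p^d{:}\SL_d(p)$; one checks that $\dll(M)\le 4$ (a top permutation group $S_m$ acting on a power $A_k^m$ costs only a bounded amount once $m\ge 5$, since the arising $\mathbb{F}_2$-modules are absorbed by the $A_m$-action in boundedly many steps, and the finitely many cases with $k,m$ small or $\SL_d(p)$ non-perfect are treated by hand). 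If $S$ is classical, Aschbacher's theorem places $M$ in one of the geometric classes $\mathcal{C}_1$--$\mathcal{C}_8$ or in class $\mathcal{S}$, and in each class $M^{(\infty)}$ is the central/direct product of the quasisimple classical groups named in the structural description of $M$ --- $[\,\cdot\,]{:}(\mathrm{Cl}\times\mathrm{Cl})$ in $\mathcal{C}_1$, $\mathrm{Cl}\wr S_m$ in $\mathcal{C}_2$, a central product of tensor factors in $\mathcal{C}_4$ and $\mathcal{C}_7$, an extraspecial normaliser $r^{1+2m}.\Sp_{2m}(r)$ in $\mathcal{C}_6$, a quasisimple group in $\mathcal{S}$, and so on --- with the quotient a bounded extension of torus, graph and permutation pieces, giving $\dll(M)\le 6$. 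If $S$ is exceptional of Lie type, $M$ is parabolic, of reductive maximal-rank type, or one of the explicitly listed remaining subgroups; for a parabolic $M=Q.L$ the unipotent radical $Q$ has bounded nilpotency class and the semisimple part of the Levi $L$ absorbs most of $Q$, so $\dll(M)\le 5$, and the other types are similar with smaller bounds. Finally, if $S$ is sporadic, I would inspect the (now complete) lists of maximal subgroups directly, obtaining $\dll(H)\le 9$ in every instance except when $G=S=\Fi_{23}$ and $H=3^{1+8}.2^{1+6}.3^{1+2}.2S_4$, where $\dll(H)=\dl(H)=10$. Putting these together with the reduction above and the bounds $\dll(M)\le 6$, $\dl(\Out S)\le 3$ in the non-sporadic cases, one obtains $\dll(H)\le 9$ everywhere except for that single maximal subgroup of $\Fi_{23}$, which is the theorem.

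For sharpness I would compute the derived series of $M=3^{1+8}.2^{1+6}.3^{1+2}.2S_4\le\Fi_{23}$ directly (cf.\ \cite{Sab26}): $2S_4\cong\GL_2(3)$ has derived length $4$, and each of the three successive extraspecial layers acts essentially fixed-point-freely on the layer immediately below it (the relevant modules over $\mathbb{F}_3$ and $\mathbb{F}_2$ being faithful and free of trivial constituents), so the derived series strips off the whole quotient above each layer before it reaches that layer; one finds $M^{(i)}=3^{1+8}.(2^{1+6}.3^{1+2}.2S_4)^{(i)}$ for $i\le 8$, then $M^{(8)}=3^{1+8}$, $M^{(9)}=Z(3^{1+8})\cong C_3$, and $M^{(10)}=1$, so $\dl(M)=4+2+2+2=10$ and $M^{(9)}\ne 1$. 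The main obstacles are thus the classical-group step --- a uniform but careful treatment of all Aschbacher classes together with the low-dimensional isomorphisms and the small fields over which a nominal ``classical factor'' is in fact solvable ($\SL_2(2)$, $\SL_2(3)$, $\PSU_3(2)$, $\Sp_4(2)$, and the like), each needing individual verification --- and the sporadic-group step, which relies on the full current list of maximal subgroups (including those of the Monster) precisely in order to certify that $\Fi_{23}$ is the unique source of the value $10$ and that all other maximal subgroups stay at $\dll\le 9$.
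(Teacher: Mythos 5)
Your outline follows essentially the same route as the paper: reduce to $M=H\cap T$ via Schreier ($\dll(H)\le \dl(\Out T)+\dll(M)$), then run the CFSG case division (O'Nan--Scott for alternating socle, Aschbacher for classical, the Liebeck--Seitz/Liebeck--Saxl--Seitz lists for exceptional, a $U\rtimes L$ analysis for parabolics, and the sporadic tables), with the same $\Fi_{23}$ subgroup $3^{1+8}.2^{1+6}.3^{1+2}.2S_4$ furnishing sharpness. However, the numerical bounds that carry the entire content of the theorem are asserted rather than derived, and several of them do not survive scrutiny. For alternating socle the claim $\dll(M)\le 4$ is false as a blanket statement (the imprimitive maximal subgroup $\Sym(4)\wr\Sym(4)$ gives $\dll=6$; harmless for the final bound, but it shows the "one checks" step is not routine). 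For exceptional socle, the assertion that non-parabolic types have "smaller bounds" than $5$ is not something the known structure hands you: for maximal-rank normalizers at $q\in\{2,3\}$ the solvable layers (tori, Weyl-group pieces, $d^4.\AGL_3(2)$-type quotients) pile up, and the paper's own verification only reaches $\dll(H)\le 9$ there after case-by-case work, including explicit computation. For classical socle, the uniform claim $\dll(M)\le 6$ over all Aschbacher classes (including parabolics, where the natural estimate is $\dl(U)+\dll(L)\le 2+5=7$, with $\dll(L)=5$ actually occurring for ${}^2A_n(2)$) is exactly the kind of statement that needs a genuine argument; the paper only establishes $7$ (and $8$ in some cases) for $\dll(H)$ and remarks that deciding between $6$ and $7$ is already delicate.

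The place where these inaccuracies become a real gap is $T=\POmega_8^+(q)$ with $q$ odd, the unique case with $\dl(\Out T)=3$, which is the only case where your additive reduction forces the sharper socle bound $\dll(M)\le 6$. Precisely there two things you have not addressed must be proved: (a) for parabolic $M$ one needs the improved bound $\dll(M)\le 6$ (it does hold, because the $D_4$ Levi and unipotent radical are small, but it requires the argument rather than the generic $7$); and (b) when $G$ contains a triality (or, for $\Sp_4(q)$, $q$ even, the exceptional graph automorphism), the maximal subgroups of $G$ are not simply intersections of $G$ with Aschbacher-class subgroups --- one gets novelties such as $N_G(G_2(q))$, $(\Omega_2^\pm(q)\times\frac1d\GL_3(q)).[2d]$, $2^4.2^6.\L_3(2)$ and torus normalizers, which your classical analysis never touches (you flag "novelties" in the reduction but give no treatment). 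The paper handles these through a separate argument using the low-dimensional classification of Bray--Holt--Roney-Dougal and a dedicated parabolic estimate; without some substitute for that step, your proof of the bound $9$ (and hence of the "only $\Fi_{23}$" clause) is incomplete, even though the overall architecture is the right one.
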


We recover \cite[Th.~1.4]{Sab26} when $H$ is solvable (and $H^{(10)}$ is trivial).
The proof of Theorem~\ref{thMain} relies on the Classification of the Finite Simple Groups, which allows us to split into cases where the structure of $H$ is well understood.
When $H$ is solvable, the work of Li and Zhang \cite{LZ11} yields immediate strong restrictions, but in the general case the analysis is much more delicate.
One of our tools is the observation that, for any normal subgroup $N$ of $H$, if both $N^{(a)}$ and $(H/N)^{(b)}$ are perfect then so is $H^{(a+b)}$ (Proposition~\ref{propGDL}).

It is natural to ask whether Theorem~\ref{thMain} can be extended to maximal subgroups of perfect groups. In general, the answer is no:

\begin{example} \label{exBad}
Let $X$ be a solvable group,
and let $G = (X \wr_5 \Alt(5))'$ be the derived subgroup of the natural wreath product.
It is not difficult to prove (see Lemma~\ref{lem:wreath'} below) that $G = Y \rtimes \Alt(5)$ is perfect,
where $Y$ is a subgroup of $X^5$ which contains a copy of $X'$.
If $K < \Alt(5)$ is a maximal subgroup,
then $H= Y \rtimes K$ is a solvable maximal subgroup of $G$ whose derived length can be arbitrarily large.
\end{example}

However, the maximal subgroup $H$ in Example~\ref{exBad} contains a large normal subgroup of $G$, which is the source of its large derived length.
In Section~\ref{sec4}, we show that this is the only possible obstruction:

\begin{theorem} \label{thPrimPerf}
If $G$ is a primitive perfect group with stabilizer $H$, then $H^{(10)}$ is perfect.
In addition, $H^{(9)}$ is perfect unless $H$ is a solvable maximal subgroup of the Fischer group $\mathrm{Fi}_{23}$.
\end{theorem}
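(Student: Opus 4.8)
The plan is to use the O'Nan--Scott classification of primitive permutation groups to reduce the statement, in every case, to a property of $D:=H\cap\soc(G)$, and then to feed in Theorem~\ref{thMain} through Proposition~\ref{propGDL}. The key initial observation is that the socle $N:=\soc(G)$ of a primitive group is transitive (the orbits of a nontrivial normal subgroup form a $G$-invariant block system), so $G=NH$ and hence $H/(H\cap N)\cong G/N$ is perfect, since $G$ is perfect and $N\trianglelefteq G$. Applying Proposition~\ref{propGDL} with the normal subgroup $D=H\cap N$ and $b=0$, it then suffices to show that $D^{(10)}$ is perfect, and --- for the sharper statement --- that $D^{(9)}$ is perfect outside one residual family, where Proposition~\ref{propGDL} only provides a sufficient condition and a short separate argument is required.

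I would then run through the O'Nan--Scott types of $G$. If $N$ is abelian (affine type), or if $N$ is regular and nonabelian (twisted wreath type), then $D=H\cap N=1$ and $H\cong G/N$ is perfect. In the diagonal types (SD and CD) and the holomorph types (HS and HC), the subgroup $D=H\cap N$ is by construction a direct product of full diagonal subgroups of the simple factors of $N$, hence a direct power of a nonabelian simple group, so $D^{(0)}=D$ is already perfect. In the almost simple type, $G$ being perfect forces $G=T:=\soc(G)$ (as $G/T\le\Out(T)$ is simultaneously perfect and solvable, hence trivial), so $D=H$ and the claim is exactly Theorem~\ref{thMain} --- this is also where the genuine $\mathrm{Fi}_{23}$ exception occurs, with $H$ a solvable maximal subgroup of $\mathrm{Fi}_{23}$.

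The remaining and most delicate case is the product action type. Here $G\le U\wr S_k$ acts on $\Gamma^k$, where $U$ is a primitive almost simple group on $\Gamma$ with socle $T_0$, and $N=T_0^k$; writing $H_0$ for the point stabilizer of $U$ and $M:=H_0\cap T_0$, one identifies $D=H\cap N$ with $M^k$, the stabilizer in $N=T_0^k$ of a point of $\Gamma^k$. Since $M\trianglelefteq H_0$ with $H_0/M$ embedding into the solvable group $\Out(T_0)$, the solvable residual $R:=\bigcap_m H_0^{(m)}$ of $H_0$ lies in $M$ and, being perfect, is also the solvable residual of $M$, with $\dl(M/R)\le\dl(H_0/R)$. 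By Theorem~\ref{thMain} applied to $U$ we have $R=H_0^{(10)}$ and $\dl(H_0/R)\le 10$, so $M^{(10)}=R$ and $D^{(10)}=R^k$ is perfect; thus $H^{(10)}$ is perfect. The same argument with $9$ in place of $10$ works unless Theorem~\ref{thMain} fails to give that $H_0^{(9)}$ is perfect, that is, unless $U=\mathrm{Fi}_{23}$ and $H_0$ is its solvable maximal subgroup of derived length $10$. In that residual case $\Out(T_0)=1$, so $G=T_0^k\rtimes\overline P$ and $H=H_0\wr\overline P$, where $\overline P\le S_k$ is transitive and, being a quotient of the perfect group $G/N$, also perfect; one then checks directly that $H'$ --- a fortiori $H^{(9)}$ --- is perfect. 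Indeed, for any group $S$ and any perfect transitive permutation group $\overline P$, writing $S_*^k:=\ker(S^k\to S^{\mathrm{ab}})$, one has $(S\wr\overline P)'=S_*^k\rtimes\overline P$, and then $(S_*^k\rtimes\overline P)'=S_*^k\rtimes\overline P$ because $(S_*^k)'=(S')^k$ and $[S_*^k,\overline P]\,(S')^k=S_*^k$ --- the latter since $H_1(\overline P,A)=0$ for every trivial $\overline P$-module $A$, as $\overline P$ is perfect.

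The main obstacle is therefore the product action type: extracting the precise structure $D=M^k$, and --- in order to obtain the sharp constant $9$ --- settling the $\mathrm{Fi}_{23}$ residual case, where Proposition~\ref{propGDL} is not directly applicable and one has to carry out the derived-series computation inside the wreath product $H_0\wr\overline P$ by hand.
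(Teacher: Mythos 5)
Your proposal is correct and follows essentially the same route as the paper: the reduction $\dll(H)\leq\dll(H\cap\soc(G))$ via Proposition \ref{propGDL} and perfectness of $G/\soc(G)$, the O'Nan--Scott case split with the regular-socle and diagonal cases being immediate, the product-action case reduced to Theorem \ref{thMain} through $H_0\cap T_0$, and the residual $\mathrm{Fi}_{23}$ product-action configuration eliminated by showing the derived subgroup of $Y\wr\overline{P}$ is perfect. Your only addition is to spell out, via the wreath-product commutator formula and vanishing of coinvariants for a perfect top group, the step the paper dismisses with ``it can be seen''; that detail is correct and consistent with the paper's claim $H'=[Y^d,S]S$.
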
 

The proof of Theorem~\ref{thPrimPerf} is a combination of Theorem~\ref{thMain} and the O'Nan-Scott theorem.
Generalizing \cite{LS25},
it was shown in \cite[Th.~1.5]{Sab26} that in a primitive group with solvable stabilizer, there exist two points whose pointwise stabilizer has derived length bounded by an absolute constant.
The results in this paper suggest the following further generalization to all primitive groups:

\begin{conjecture} \label{conjTwoPoints}
In every finite primitive group,
there exist two points (possibly the same) whose pointwise stabilizer $H$ satisfies that $H^{(c)}$ is perfect,
where $c$ is an absolute constant.
\end{conjecture}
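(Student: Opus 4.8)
The plan is to deduce Conjecture \ref{conjTwoPoints} from Theorem \ref{thMain} via the O'Nan--Scott theorem. Write $\dll(X)$ for the least $m$ with $X^{(m)}$ perfect, so that $\dll$ measures exactly how far a group is from being ``nearly perfect'' in the sense of Theorem \ref{thMain}; by Proposition \ref{propGDL} one has $\dll(L)\le\dll(K)+\dll(L/K)$ whenever $K\trianglelefteq L$. Given a finite primitive group $G$ on $\Omega$ with socle $N$, I would stratify by O'Nan--Scott type and, in each case, produce $\alpha,\beta\in\Omega$ for which the two-point stabilizer $D=G_{\alpha\beta}$ has $\dll(D)$ bounded by an absolute constant, using repeatedly that $\Out(T)$ is solvable of derived length at most $3$ for every finite simple $T$ (Schreier, via CFSG) and that $\dll$ is bounded on the subgroups of a direct power $Y^k$ in terms of $Y$ alone. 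The pitfall to watch throughout is that $\dll$ is \emph{not} monotone under passage to arbitrary subgroups --- $\Alt(k)$ is perfect but has solvable subgroups of unbounded derived length --- so $\alpha$ and $\beta$ must be selected carefully enough that the permutational ``top'' of $D$ is not such a bad subgroup of $\Sym(k)$.

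\textbf{Almost simple type.} Here $D\le H=G_\alpha$, a core-free maximal subgroup, and Theorem \ref{thMain} gives $\dll(H)\le 10$. If $G$ has base size $2$ --- which, by the classification of the primitive almost simple groups of base size $>2$ (Burness and collaborators), excludes only an explicit list of standard actions --- one chooses $\alpha,\beta$ with $D=1$. For the standard actions ($\Sym(n)$ or $\Alt(n)$ on subsets or partitions; classical groups on subspaces or pairs of subspaces, possibly with extra non-degeneracy conditions) the two-point stabilizer is transparent --- a direct product of smaller symmetric or alternating groups, or the stabilizer of a flag of length at most three, whose unipotent radical has absolutely bounded nilpotency class and whose Levi quotient is a bounded product of classical and linear groups times a solvable group of bounded derived length --- and a short, routine analysis gives $\dll(D)\le c_0$ for an absolute $c_0$.

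\textbf{Diagonal, product, and twisted-wreath types.} Suppose $N=T^k$ with $T$ nonabelian and $k\ge 2$. In simple or compound diagonal type and its holomorph variants, $N_\alpha$ is a direct power of a full diagonal copy of $T$, hence perfect; choosing $\beta$ inside an $N$-orbit keeps $D\cap N$ perfect (or trivial), and $D/(D\cap N)$ is then an extension of a subgroup of $\Sym(k)$ by a solvable group of bounded derived length. In product-action type, $G\le H_0\wr\Sym(k)$ with $H_0$ primitive of almost simple or diagonal type on $\Delta$ and $\Omega=\Delta^k$; taking $\alpha=(\delta,\dots,\delta)$ and $\beta=(\delta_1',\dots,\delta_k')$ one finds $D$ inside $\bigl(\prod_i(H_0)_{\delta\delta_i'}\bigr)\!.\,\Stab_W(S)$, where $W\le\Sym(k)$ is the transitive group induced by $G$ on the factors, $S$ records the coordinates at which the choice of $\delta_i'$ forces extra coincidences, and each $(H_0)_{\delta\delta_i'}$ has bounded $\dll$ by the almost simple case; twisted-wreath type is similar with $\Out(T)\wr\Sym(k)$ replacing $H_0\wr\Sym(k)$ (here $N$ is regular, so $N_\alpha=1$). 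In all of these cases the matter reduces to the uniform statement that \emph{for every transitive $W\le\Sym(k)$ there is $S\subseteq\{1,\dots,k\}$ with $\dll$ of the setwise stabilizer $\Stab_W(S)$ bounded by an absolute constant}. I would prove this by a secondary induction on $|W|$: if $W$ preserves a block system, one colours each block by a good subset for the action induced on a block and simultaneously colours the block set by a good subset for the induced group on the blocks, so that above a bounded number of the lowest ``levels'' the coloured subtrees become pairwise non-isomorphic and no further block-swaps survive; if $W$ is primitive, O'Nan--Scott for $W$ either supplies a base of bounded size --- take $S$ to be such a base, so $\Stab_W(S)$ embeds in a symmetric group of bounded degree --- or again reduces to a smaller instance.

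\textbf{Affine type --- the main obstacle.} Finally let $N=V\cong\mathbb F_p^{\,d}$, so $G=V\rtimes G_0$ with $G_0\le\GL(V)$ irreducible and primitive as a linear group; with $\alpha=0$ and $\beta=v\ne 0$ one has $D=(G_0)_v$, and the conjecture becomes the assertion that \emph{every finite primitive linear group has a nonzero vector whose stabilizer has bounded $\dll$}. If $G_0$ has a regular orbit on $V$ this stabilizer is trivial, and one would invoke the regular-orbit results descending from the $k(GV)$-problem (Gluck, P\'alfy, Espuelas, Riese--Schmid, and their successors); the obstruction is that these are proved under coprimality or solvability hypotheses unavailable here. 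When $G_0$ has no regular orbit, Aschbacher's theorem restricts it to the nearly simple case or to configurations with a normal extraspecial-type or tensor-induced subgroup on a small module, and in the nearly simple case $G_0=P.(\text{bounded solvable})$ with $P$ quasisimple acting irreducibly; one must then bound $\dll$ of the generic vector stabilizer $P_v$. This is immediate for the natural modules of the classical groups --- there $P_v=[q^{\,d-1}]{:}\GL_{d-1}(q)$ has abelian unipotent radical, so $\dll(P_v)\le 1$ --- but for general quasisimple groups and modules it demands a uniform ``generic stabilizer'' analysis that is not presently available in the form required. Producing such a uniform bound --- in essence a bounded-orbit theorem valid for all finite irreducible linear groups --- is the crux, and is where I expect a complete proof of Conjecture \ref{conjTwoPoints} to stand or fall.
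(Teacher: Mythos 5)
You are attempting to prove something the paper does not prove: the statement is Conjecture \ref{conjTwoPoints}, explicitly left open. The paper only offers evidence for it --- Theorem \ref{thMain} and Theorem \ref{thPrimPerf} treat the \emph{one}-point stabilizer (of an almost simple, respectively primitive perfect, group), and Example \ref{exBad2} shows that any constant $c$ in the conjecture must be at least $7$ --- so there is no proof of the paper's to compare yours against, and your proposal must stand on its own. It does not: it is a research plan with at least two genuine gaps, one of which you candidly acknowledge. The affine case really is unresolved as you say --- the needed ``some nonzero vector has stabilizer of bounded $\dll$'' statement is not available without solvability or coprimality hypotheses, and the solvable analogue is exactly what \cite{LS25} and \cite[Th.~1.5]{Sab26} required genuine work to establish --- so the argument cannot be completed by citation.

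The second gap is one you treat as routine but is not. Your reduction of the product, diagonal and twisted-wreath cases rests on the ``uniform statement'' that every transitive $W\le\Sym(k)$ has a subset $S$ with $\dll(\Stab_W(S))$ absolutely bounded, proved by a sketched induction over block systems. First, in product action the top part of a two-point stabilizer is the stabilizer of a \emph{colouring} of the $k$ coordinates by the data $(H_0)_{\delta\delta_i'}$, not merely of a subset, and you do not get to choose that colouring freely once $\beta$ is fixed; second, the induction step as described does not control $\dll$ of what survives the colouring --- this is precisely where the difficulty lives. The paper's Example \ref{exBad2}, with top group $(\Sym(4)\wr_4\Sym(4))\wr C_2$, shows that wreath-type tops admit no $4$-colouring with trivial stabilizer and force every two-point stabilizer to have derived length at least $7$; whether deeper iterated wreath products can be handled uniformly is essentially the open content of the conjecture in these types, so asserting the uniform statement is close to assuming what is to be proved. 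In short: your overall strategy (O'Nan--Scott plus Theorem \ref{thMain}) is the natural one and matches the spirit in which the authors pose the conjecture, but both the affine case and the transitive-colouring-stabilizer claim are missing ideas, not missing details.
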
 

We point out in Example~\ref{exBad2} that the constant in Conjecture~\ref{conjTwoPoints} is at least $7$.

\vspace{0.1cm} 
\section{Preliminaries} \label{sec2}

\subsection{A generalized derived length} 

Every group in this paper is finite.
For a finite group $G$,
let $G=G^{(0)}$ and
$$ G^{(n+1)} \> = \> [G^{(n)},G^{(n)}] $$
for any $n \geq 0$
(we also write $G'=G^{(1)}$).
Let $G^{(\infty)} = \cap_{n \geq 1} G^{(n)}$ be the perfect core (or solvable residual) of $G$.
Since $G$ is finite, $G^{(\infty)}$ is the last term of the derived series, or equivalently it is the unique largest perfect subgroup
(note that a group generated by perfect subgroups is perfect).

In order to deal with our main results, it is convenient to write $\dll(G)$ for the minimum $n$ such that $G^{(n)}=G^{(\infty)}$ (this is also the minimum $n$ such that $G^{(n)} = G^{(n+1)}$).
We may also write $\dl(G)$ if $G$ is solvable, i.e. if $G^{(\infty)} =1$.
Observe that $\dll(G) = \dl(G/G^{(\infty)})$ and that $\dll(G) =0$ if and only if $G$ is a (possibly trivial) perfect group.

\begin{lemma} \label{lemSub} 
    Let $G$ be a finite group and $H \leqslant G$.
    Then
    \begin{itemize}
        \item for each $n \geq 0$ we have $H^{(n)} \subseteq G^{(n)}$;
        in particular, $H^{(\infty)} \subseteq G^{(\infty)}$;
        \item if $G^{(\infty)} \subseteq H$, then $H^{(\infty)} = G^{(\infty)}$ and $\dll(H) \leq \dll(G)$.
    \end{itemize} 
\end{lemma}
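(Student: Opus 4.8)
The plan is to prove the two bullet points in order, the first feeding directly into the second, using nothing beyond elementary commutator manipulations and the characterisation of $G^{(\infty)}$ recalled just above.

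For the first bullet I would argue by induction on $n$. The base case $n=0$ is the hypothesis $H=H^{(0)}\subseteq G^{(0)}=G$. For the inductive step, assuming $H^{(n)}\subseteq G^{(n)}$, I would invoke the elementary fact that $A\subseteq B$ implies $[A,A]\subseteq[B,B]$ (each generator $[a_1,a_2]$ of the former is a generator of the latter), applied with $A=H^{(n)}$ and $B=G^{(n)}$, to get $H^{(n+1)}=[H^{(n)},H^{(n)}]\subseteq[G^{(n)},G^{(n)}]=G^{(n+1)}$. Intersecting over all $n\geq 1$ then yields $H^{(\infty)}=\bigcap_n H^{(n)}\subseteq\bigcap_n G^{(n)}=G^{(\infty)}$.

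For the second bullet, assume $G^{(\infty)}\subseteq H$. Since $G^{(\infty)}$ is a perfect subgroup of $H$, and $H^{(\infty)}$ is the unique largest perfect subgroup of $H$, we get $G^{(\infty)}\subseteq H^{(\infty)}$; combined with the reverse inclusion from the first bullet this gives $H^{(\infty)}=G^{(\infty)}$. For the derived-length inequality, set $m=\dll(G)$, so $G^{(m)}=G^{(\infty)}$. The derived series of $H$ is non-increasing with stable value $H^{(\infty)}$, so $H^{(\infty)}\subseteq H^{(m)}$; on the other hand the first bullet gives $H^{(m)}\subseteq G^{(m)}=G^{(\infty)}=H^{(\infty)}$. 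Hence $H^{(m)}=H^{(\infty)}$, which by definition of $\dll$ means $\dll(H)\leq m=\dll(G)$.

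There is no genuine obstacle here; the only step requiring a moment's care is the appeal, in the second bullet, to the fact that $G^{(\infty)}$ (resp.\ $H^{(\infty)}$) is the unique largest perfect subgroup — equivalently, that a group generated by perfect subgroups is perfect — which is precisely the remark quoted immediately before the statement. As an alternative route to $\dll(H)\leq\dll(G)$, once $H^{(\infty)}=G^{(\infty)}$ is known, one may apply the first bullet to the quotient $G\to G/G^{(\infty)}$: the image of $H$ is the subgroup $H/G^{(\infty)}\leq G/G^{(\infty)}$ of a solvable group, so $\dl(H/G^{(\infty)})\leq\dl(G/G^{(\infty)})$, and these equal $\dll(H)$ and $\dll(G)$ by the identity $\dll(\,\cdot\,)=\dl(\,\cdot\,/\,\cdot^{(\infty)})$ noted in the text.
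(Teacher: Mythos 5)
Your proof is correct and essentially matches the paper's: the first bullet is the same routine induction, and for the second bullet your direct argument (applying the first bullet at $n=\dll(G)$ to get $H^{(m)}=H^{(\infty)}$) is a minor reshuffling of the paper's one-line passage to the solvable quotient $G/G^{(\infty)}$ — indeed your closing "alternative route" is exactly the paper's proof.
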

\begin{proof}
    The first part is trivial.
    For the second part, we work in the solvable group $G/G^{(\infty)}$. Since subgroups of solvable groups are solvable with derived length not exceeding the original derived length, the result is clear.
\end{proof}

\begin{lemma} \label{lemSubNor} 
    Let $G$ be a finite group and $N \unlhd G$.
    Then
    \begin{itemize}
        \item for each $n \geq 0$ we have $(G/N)^{(n)} = NG^{(n)}/N$;
        in particular, $(G/N)^{(\infty)} = NG^{(\infty)}/N$;
        \item if $N \subseteq G^{(\infty)}$, then $\dll(G) = \dll(G/N)$.
    \end{itemize} 
\end{lemma}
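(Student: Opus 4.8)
The plan is to prove the first bullet by a short induction on $n$ and then read the second bullet off from it.

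For the first bullet, I would invoke the standard fact that a surjective homomorphism carries derived subgroups onto derived subgroups: if $\phi\colon G\to Q$ is onto, then $\phi(G^{(n)})=Q^{(n)}$ for all $n\geq 0$. This follows by induction, the case $n=0$ being immediate, and the inductive step coming from $\phi([a,b])=[\phi(a),\phi(b)]$ together with surjectivity, which give $\phi(G^{(n+1)})=\phi([G^{(n)},G^{(n)}])=[\phi(G^{(n)}),\phi(G^{(n)})]=[Q^{(n)},Q^{(n)}]=Q^{(n+1)}$. Taking $\phi$ to be the canonical projection $\pi\colon G\to G/N$ and noting that $\pi(G^{(n)})=NG^{(n)}/N$ (a subgroup of $G/N$, since $N\unlhd G$) then yields $(G/N)^{(n)}=NG^{(n)}/N$ for every $n$.

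To get the "in particular" clause I would use that $G$ is finite: the descending chain $G^{(0)}\supseteq G^{(1)}\supseteq\cdots$ stabilizes at $G^{(\infty)}$, hence so does $NG^{(0)}\supseteq NG^{(1)}\supseteq\cdots$, with stable value $NG^{(\infty)}$; therefore $(G/N)^{(\infty)}=\bigcap_n(G/N)^{(n)}=\bigcap_n NG^{(n)}/N=NG^{(\infty)}/N$. This is the one place where finiteness genuinely enters, since in general the intersection of the images need not be the image of the intersection.

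For the second bullet, assuming $N\subseteq G^{(\infty)}$ gives $NG^{(\infty)}=G^{(\infty)}$, so the first bullet yields $(G/N)^{(\infty)}=G^{(\infty)}/N$. The third isomorphism theorem then gives $(G/N)/(G/N)^{(\infty)}=(G/N)/(G^{(\infty)}/N)\cong G/G^{(\infty)}$, and since $\dll(X)=\dl(X/X^{(\infty)})$ for every $X$ and derived length is an isomorphism invariant, $\dll(G/N)=\dl(G/G^{(\infty)})=\dll(G)$. I expect no real obstacle: the whole statement is routine once the projection-versus-derived-subgroup identity and the finiteness remark above are in place.
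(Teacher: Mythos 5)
Your proposal is correct and follows essentially the same route as the paper: the first bullet is the standard fact that the quotient map carries derived subgroups onto derived subgroups (which the paper dismisses as trivial), and the second bullet rests on the isomorphism $(G/N)\big/(G/N)^{(\infty)} \cong G/G^{(\infty)}$ together with $\dll(X)=\dl(X/X^{(\infty)})$, exactly as in the paper. Your extra remark on where finiteness enters (to identify $\bigcap_n NG^{(n)}/N$ with $NG^{(\infty)}/N$) is a sound and welcome detail, not a deviation.
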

\begin{proof}
    The first part is trivial.
    For the second part, observe that $G/G^{(\infty)}$ is isomorphic to $\frac{(G/N)}{(G/N)^{(\infty)}}$.
\end{proof}

A key difference between $\dll(G)$ and the genuine derived length, which is only defined for solvable groups, is that the first can increase on subgroups in general (just take a perfect group and a non-trivial solvable subgroup).
However, $\dll(G)$ behaves well with respect to group extensions.

\begin{proposition} \label{propGDL}
If $N \unlhd G$, then
$$ \dll(G/N) \> \leq \> \dll(G) \> \leq \> \dll(G/N) + \dll(N) . $$
\end{proposition}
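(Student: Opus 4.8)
\textbf{Proof plan for Proposition \ref{propGDL}.}
The plan is to prove the two inequalities separately, using Lemma \ref{lemSubNor} for the left-hand one and a direct computation with the derived series for the right-hand one. For the inequality $\dll(G/N) \leq \dll(G)$, I would set $n = \dll(G)$ and use the first part of Lemma \ref{lemSubNor} to write $(G/N)^{(n)} = NG^{(n)}/N$. Since $G^{(n)} = G^{(n+1)}$, applying the same formula again gives $(G/N)^{(n)} = (G/N)^{(n+1)}$, which by the characterisation of $\dll$ in terms of the stabilisation index yields $\dll(G/N) \leq n$; alternatively, one can simply note that $G/N$ is a quotient and quotients of a group $G$ have $\dll$ at most $\dll(G)$, since the image of $G^{(\infty)}$ is perfect and hence contained in $(G/N)^{(\infty)}$ while $(G/N)^{(n)}$ is the image of $G^{(n)}$.

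For the upper bound $\dll(G) \leq \dll(G/N) + \dll(N)$, put $a = \dll(G/N)$ and $b = \dll(N)$. The key step is to track where $G^{(a+b)}$ sits. Applying the first part of Lemma \ref{lemSubNor} with $n = a$, we get $NG^{(a)}/N = (G/N)^{(a)} = (G/N)^{(\infty)} = (G/N)^{(a+1)} = NG^{(a+1)}/N$, so $NG^{(a)} = NG^{(a+1)}$, i.e. the derived series of $G$ stabilises modulo $N$ at step $a$. Now I want to descend into $N$: I would show $G^{(a)} \cap N$ behaves like a subgroup to which the bound $\dll(N) = b$ applies. Concretely, $G^{(a)}$ is a subgroup of $G$ with $G^{(a)} N = G^{(a+1)} N$, and one checks that $G^{(a+k)} \subseteq N$ is \emph{not} quite what happens; rather, $G^{(a+1)} \leq [G^{(a)}N, G^{(a)}N]$ and modulo the stabilisation one gets $G^{(a+1)} \leq (G^{(a)} \cap N)\,[\text{stuff already in } G^{(a+1)}]$. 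The clean way: let $K = G^{(a)}$; then $KN/N$ is perfect, so $[KN,KN]N = KN$, which combined with $[KN,KN] = [K,K][K,N][N,N] \cdot(\dots)$ and iteration shows $K^{(j)} N = K N$ for all $j$, hence $K^{(j)}$ maps onto the perfect group $KN/N \cong K/(K\cap N)$. Therefore for every $j$ we have $K^{(j)}(K\cap N) = K$, and so $K^{(j)} \cap N \supseteq K^{(j+1)}$-type relations force $K^{(\infty)} (K \cap N) = K$ with $K^{(\infty)}$ the perfect core of $K$; since $K \cap N \leq N$ has $\dll(K\cap N) \leq \dl$-type bound $b$ by Lemma \ref{lemSub} (as $K \cap N$ is a subgroup of $N$ — careful, $\dll$ can grow on subgroups, so I instead use that $K \cap N \unlhd KN$ or work with $K^{(\infty)}$ directly).

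A cleaner route I would actually follow: argue that $G^{(a)} N / N$ is perfect, so $G^{(a)} N = G^{(a+1)} N$; then by an isomorphism-theorem computation $G^{(a)}/(G^{(a)} \cap N) \cong G^{(a)} N / N$ is perfect, whence $(G^{(a)})^{(1)}(G^{(a)} \cap N) = G^{(a)}$ and inductively $(G^{(a)})^{(j)}(G^{(a)} \cap N) = G^{(a)}$ for all $j \geq 0$. Apply this with $j = b$: then $G^{(a+b)} = (G^{(a)})^{(b)}$ together with $(G^{(a)})^{(b)} (G^{(a)} \cap N) = G^{(a)}$ shows $G^{(a)}/G^{(a+b)}$ is a quotient of $G^{(a)} \cap N / (G^{(a+b)} \cap N)$. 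Now $G^{(a)} \cap N$ is normal in $N$ (indeed in $G^{(a)} N$), and $N^{(b)} = N^{(\infty)}$ is perfect; since $G^{(a)} \cap N \unlhd N$, we have $(G^{(a)} \cap N)^{(b)} \subseteq N^{(b)} = N^{(\infty)}$ is perfect, so $\dll(G^{(a)} \cap N) \leq b$ — wait, that needs $\dll$ monotone on \emph{normal} subgroups, which does hold because a normal subgroup's derived series embeds in the ambient one and $N^{(b)}$ perfect forces $(G^{(a)}\cap N)^{(b)}$ perfect. Combining, $G^{(a+b)} = (G^{(a)})^{(b)}$ is perfect: indeed $(G^{(a)})^{(b+1)} = [(G^{(a)})^{(b)},(G^{(a)})^{(b)}]$, and using $(G^{(a)})^{(b)}(G^{(a)}\cap N) = G^{(a)}$ one gets $(G^{(a)})^{(b)} = [(G^{(a)})^{(b)},(G^{(a)})^{(b)}]$ once $(G^{(a)}\cap N)^{(b)}$ is perfect — this is exactly the content of Proposition \ref{propGDL} applied in the smaller group, so to avoid circularity I would instead prove the numerical statement "if $N_0 \unlhd G_0$ with $G_0/N_0$ perfect and $N_0^{(b)}$ perfect then $G_0^{(b)}$ perfect" by induction on $b$ as a lemma first, and then deduce the general two-sided bound. \textbf{The main obstacle} is precisely this bookkeeping: getting the commutator-subgroup containments $[AB,AB] \subseteq [A,A][A,B][B,B]$ to telescope correctly and handling the fact that $\dll$ is not monotone on arbitrary subgroups (only on quotients and on normal subgroups), so the induction must be set up so that every subgroup one passes to is either a quotient or normal in the relevant group.
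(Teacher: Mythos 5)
The left-hand inequality is fine: both routes you describe (showing $(G/N)^{(n)} = (G/N)^{(n+1)}$ at $n = \dll(G)$ via Lemma \ref{lemSubNor}, or simply noting that a quotient of $G$ cannot have larger $\dll$) are correct and close to what the paper does.

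The right-hand inequality, however, contains a genuine gap. You assert that since $G^{(a)}\cap N \unlhd N$ and $N^{(b)}$ is perfect, also $(G^{(a)}\cap N)^{(b)}$ is perfect --- i.e.\ that $\dll$ is monotone on \emph{normal} subgroups, justified by ``a normal subgroup's derived series embeds in the ambient one.'' The containment $(G^{(a)}\cap N)^{(b)} \subseteq N^{(b)}$ is true, but a subgroup of a perfect group need not be perfect, so the conclusion is false. Simplest counterexample: take $N = \SL_2(5)$, perfect (so $b=0$ and $N^{(0)}$ is perfect), with normal subgroup $M = Z(N) \cong C_2$; then $M^{(0)} = M$ is not perfect and $\dll(M) = 1 > 0 = \dll(N)$. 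You flag this as a concern yourself (``wait, that needs $\dll$ monotone on normal subgroups'') but then wrongly assert it holds. As a result the intended application of your auxiliary lemma --- which needs $\dll(G^{(a)}\cap N)\le b$ as input --- does not go through, and the argument stalls.

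For what it is worth, the auxiliary lemma you propose is true: if $N_0\unlhd G_0$ with $G_0/N_0$ perfect and $\dll(N_0)\le b$, then from $G_0^{(j)}N_0 = G_0$ one gets $G_0/G_0^{(j)}\cong N_0/(N_0\cap G_0^{(j)})$, a \emph{solvable quotient} of $N_0$ and hence of derived length at most $\dll(N_0)$, so $G_0^{(b)} = G_0^{(b+1)}$. The one fact doing the work there --- solvable quotients have derived length $\le\dll$ --- is exactly what the paper leans on, and organising around it directly avoids the problem. The paper's proof passes to the solvable group $G/G^{(\infty)}$, applies the classical subadditivity $\dl \le \dl(\text{quotient}) + \dl(\text{kernel})$ with the normal subgroup $NG^{(\infty)}/G^{(\infty)}$, and then observes that $G/NG^{(\infty)}$ and $N/(N\cap G^{(\infty)})$ are solvable quotients of $G/N$ and $N$ respectively, so have derived length at most $\dll(G/N)$ and $\dll(N)$. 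If you set up the computation at the level of the specific solvable quotient $G/G^{(a+b+1)}$ rather than trying to control $\dll(G^{(a)}\cap N)$ as a group in its own right, the bookkeeping issue you anticipated disappears.
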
 
\begin{proof}
For each $n \geq 0$, it is easy to see that $(G/N)^{(n)}$ is perfect if $G^{(n)}$ is perfect, and so $\dll(G/N) \leq \dll(G)$.
For the right hand side, observe that this result is well-known (and trivial) for solvable groups. Therefore we have
\begin{align*}
    \dll(G) & =  \dl(G/G^{(\infty)}) \\
    & \leq \dl(G/G^{(\infty)}N) + \dl(G^{(\infty)}N/N) \\
    & = \dl(G/G^{(\infty)}N) + \dl(N/G^{(\infty)} \cap N) .
\end{align*}
Now $\dl(G/G^{(\infty)}N) \leq \dll(G/N)$ because the first group is a solvable quotient of the second.
For the same reason we get $\dl(N/G^{(\infty)} \cap N) \leq \dll(N)$,
and the proof follows.
\end{proof}

\begin{corollary} \label{corExt}
If $G=N_0 \rhd \cdots \rhd N_k = 1$ is a subnormal series where $N_i/N_{i+1}$ is solvable or perfect for each $i$,
then,
$$ \dll(G) \> \leq \> 
\sum_{N_i/N_{i+1} \mbox{ solvable}} \dl(N_i/N_{i+1}) . $$
\end{corollary}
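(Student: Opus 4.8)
The plan is to induct on the length $k$ of the subnormal series. The base case $k=0$, where $G=1$, is immediate: $\dll(1)=0$ and the sum is empty. (The case $k=1$ is equally clear, since then $G=N_0$ is itself solvable or perfect, and by the remarks preceding Lemma \ref{lemSub} we have $\dll(G)=\dl(G)$ in the first situation and $\dll(G)=0$ in the second.)

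For the inductive step, the key observation is that $N_1\unlhd N_0=G$ because the series is subnormal; so Proposition \ref{propGDL}, applied with $N=N_1$, gives $\dll(G)\leq \dll(G/N_1)+\dll(N_1)$. Now $G/N_1=N_0/N_1$ is by hypothesis solvable or perfect, so $\dll(G/N_1)$ equals $\dl(N_0/N_1)$ when $N_0/N_1$ is solvable and equals $0$ otherwise; and in the degenerate case where $N_0/N_1$ is simultaneously solvable and perfect (hence trivial) we have $\dl(N_0/N_1)=0$, so there is no ambiguity. On the other hand, $N_1=N_1\rhd N_2\rhd\cdots\rhd N_k=1$ is a subnormal series of the same type of length $k-1$, so the inductive hypothesis bounds $\dll(N_1)$ by $\sum_{i\geq 1,\ N_i/N_{i+1}\ \mathrm{solvable}}\dl(N_i/N_{i+1})$.

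Adding the two bounds yields $\dll(G)\leq \sum_{i\geq 0,\ N_i/N_{i+1}\ \mathrm{solvable}}\dl(N_i/N_{i+1})$, which is exactly the asserted inequality. I do not expect any real obstacle: the whole argument is a one-step induction fuelled by the right-hand inequality of Proposition \ref{propGDL}, together with the elementary facts that $\dll$ restricts to the usual derived length on solvable groups and vanishes on perfect groups. The only point that requires a moment's attention is the bookkeeping of the index set of the sum across the inductive step, and the harmless overlap between the "solvable" and "perfect" cases noted above; note in particular that one must peel the factor $N_0/N_1$ off the top of the series rather than $N_{k-1}$ off the bottom, since only the former is guaranteed to be normal in $G$.
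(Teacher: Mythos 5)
Your proof is correct and is exactly the paper's argument made explicit: the paper's proof simply says ``Apply Proposition \ref{propGDL} iteratively,'' which is precisely your induction peeling $N_0/N_1$ off the top and using that $\dll$ coincides with $\dl$ on solvable groups and vanishes on perfect ones. No differences worth noting.
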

\begin{proof}
Apply Proposition~\ref{propGDL} iteratively.
\end{proof}

Corollary~\ref{corExt} is particularly useful when some large abelian section appears in the subnormal series. Large $p$-sections are also nice, as explained in the following remark:

\begin{remark} \label{remP}
For a $p$-group $P$ we have $\dl(P) \leq 1$ if $|P| \leq p^2$,
$\dl(P) \leq 2$ if $|P| \leq p^5$,
$\dl(P) \leq 3$ if $|P| \leq p^{12}$,
$\dl(P) \leq 4$ if $|P| \leq p^{21}$,
and $\dl(P) \leq 5$ if $|P| \leq p^{39}$ \cite{Man00}.
\end{remark}

We report another useful lemma.

\begin{lemma} \label{lemSupp}
For every finite group $G$, there exists a solvable subgroup $S$ such that $G= G^{(\infty)} S$.
In particular, $\dll(G) \leq \dl(S)$.
\end{lemma}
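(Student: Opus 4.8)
The plan is to prove the formally stronger assertion that the solvable residual $G^{(\infty)}$ has a \emph{solvable supplement} in $G$, i.e.\ that there is a solvable subgroup $S\leqslant G$ with $SG^{(\infty)}=G$. This suffices: then $G/G^{(\infty)}=SG^{(\infty)}/G^{(\infty)}\cong S/(S\cap G^{(\infty)})$ is a quotient of the solvable group $S$, so $\dll(G)=\dl(G/G^{(\infty)})\leqslant\dl(S)$. I would prove the supplement statement by induction on $|G|$. If $G$ is perfect take $S=1$, and if $G$ is solvable take $S=G$; so from now on assume $G$ is neither.

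If $G$ has an abelian minimal normal subgroup $K$, consider $G/K$: by Lemma \ref{lemSubNor} we have $(G/K)^{(\infty)}=KG^{(\infty)}/K$, and by the inductive hypothesis there is a solvable $\overline{S}\leqslant G/K$ with $\overline{S}\cdot(G/K)^{(\infty)}=G/K$. Let $S$ be the preimage of $\overline{S}$ in $G$. Then $K\leqslant S$, so $S$ is an extension of the abelian group $K$ by $S/K\cong\overline{S}$ and hence solvable; and pulling back the equality $\overline{S}\cdot KG^{(\infty)}/K=G/K$ (and using $K\leqslant S$) gives $SG^{(\infty)}=G$.

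Otherwise every minimal normal subgroup of $G$ is non-abelian; fix one, $K$. Being characteristically simple, $K$ is a direct product of non-abelian simple groups, hence perfect, so $K=K^{(\infty)}\leqslant G^{(\infty)}$ by Lemma \ref{lemSub}. Pick a prime $p$ dividing $|K|$ and a Sylow $p$-subgroup $P$ of $K$. Since a non-abelian simple group is never a $p$-group, $P$ is not normal in $K$, so $N_G(P)\neq G$; meanwhile the Frattini argument yields $G=K\cdot N_G(P)$, and therefore $G=G^{(\infty)}\cdot N_G(P)$ as well. Applying the inductive hypothesis to the proper subgroup $N_G(P)$ produces a solvable $S\leqslant N_G(P)$ with $S\cdot N_G(P)^{(\infty)}=N_G(P)$; since $N_G(P)^{(\infty)}\leqslant G^{(\infty)}$ by Lemma \ref{lemSub}, we obtain $SG^{(\infty)}=G$, which closes the induction.

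The one genuinely important observation is that it is the \emph{existence of a solvable supplement}, rather than the inequality $\dll(G)\leqslant\dl(S)$ itself, that forms the correct inductive hypothesis: in the abelian case the value of $\dll$ may drop when one passes to $G/K$, yet the supplement still lifts without loss. Everything else is routine; the only verification I would spell out is that $N_G(P)\neq G$ in the last case, which rests on the fact that a non-abelian simple group cannot be a $p$-group.
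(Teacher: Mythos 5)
Your proof is correct, and it establishes exactly the same stronger statement as the paper — that $G^{(\infty)}$ has a solvable supplement — by induction on $|G|$; the two arguments differ only in how the inductive step is carried out. The paper notes that when $G$ is neither solvable nor perfect, the non-solvable normal subgroup $G^{(\infty)}$ cannot lie in the Frattini subgroup $\Phi(G)$ (using the classical fact that $\Phi(G)$ is nilpotent), so there is a maximal subgroup $M$ with $G=G^{(\infty)}M$, and the induction is applied to $M$ in a single uniform step. You instead split on the nature of a minimal normal subgroup $K$: if $K$ is abelian you pass to $G/K$ and pull a solvable supplement back (observing that $\dll$ may drop in the quotient, which is why the "supplement" formulation, not the numeric inequality, is the right induction hypothesis — a point the paper leaves implicit); if no minimal normal subgroup is abelian you use the Frattini argument on a Sylow subgroup of $K$ to produce the proper subgroup $N_G(P)$ supplementing $G^{(\infty)}\supseteq K$, and induct there. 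Your route avoids invoking the nilpotency of $\Phi(G)$ at the cost of a two-case analysis and a small amount of extra bookkeeping (e.g.\ the check that $N_G(P)\neq G$), while the paper's route is shorter but leans on that slightly heavier structural fact; both are entirely standard and sound.
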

\begin{proof}
We work by induction on $|G|$.
We can assume that $G^{(\infty)} \neq 1$ is non-solvable,
so $G^{(\infty)}$ is not solvable and not contained in the Frattini subgroup of $G$,
and there exists a maximal subgroup $M$ of $G$ such that $G=G^{(\infty)}M$.
By induction, there exists a solvable subgroup $S$ of $M$ such that $M=M^{(\infty)} S$.
But then $G = G^{(\infty)} M = G^{(\infty)} S$.

The second part follows because $G/G^{(\infty)}$ is isomorphic to a quotient of $S$.
\end{proof}

\begin{remark} \label{remRankBound}
Let $G \leqslant \GL_n(q)$.
By Lemma~\ref{lemSupp} we have $\dll(G) \leq \dl(S)$ for some solvable group $S \leqslant \GL_n(q)$.
Now, the derived length of $S$ is bounded by a function on $n$ \cite{New72}.
In particular, if we aim to prove $\dll(G) \leq 9$, looking at \cite[Pag.~70]{New72}, we can assume that $G$ does not have a faithful linear representation of dimension less than $9$.
\end{remark} 

A few times in the paper we deal with the derived subgroup of a particular wreath product. We isolate the following statement.

\begin{lemma} \label{lem:wreath'}
Let $G = X \wr_d S$ with $S \leqslant \Sym(d)$ for $d \geq 2$.
If $S$ is transitive and perfect, and $B=X^d$ is the base of the wreath product, then $G' = [B,S]S$, and $G'$ is perfect.
\end{lemma}
\begin{proof}
Write $B=X_1\times \cdots \times X_d$,
fix $i \neq j$ and $s\in S$ such that $i^s=j$, and $x,y\in X_i$.
Consider the element
$$ [xy,s][x,s]^{-1}[y,s]^{-1}\in [B,S] , $$
and compute
$$ [xy,s][x,s]^{-1}[y,s]^{-1} =
y^{-1}x^{-1}x^s y^s (x^s)^{-1}x (y^s)^{-1}y. $$
Since $x^s,y^s\in X_j$, and $X_i$ commutes with $X_j$, this simplifies to
$$ x^s y^s (x^s)^{-1}(y^s)^{-1} \in X_j'. $$
Moreover, as $x$ and $y$ vary in $X_i$, $[B,S]$ contains all commutators in $X_j$, so $X_j' \subseteq [B,S]$. Since $j$ was arbitrary, we obtain $B' \subseteq [B,S]$.

Now we have $G'= B' [B,S] S = [B,S]S$.
It follows that $[B,S] = [B,S'] \subseteq G^{(2)}$ and also $S = S' \subseteq G^{(2)}$, so $G' = G^{(2)}$ as desired.
\end{proof}

\subsection{Finite simple groups} 

Let $T$ be a non-abelian finite simple group.
The {\itshape outer automorphism group} $\Out(T)$ is the quotient of $\Aut(T)$ by the normal subgroup of the inner automorphisms (which is isomorphic to $T$).
The following result is a consequence of the Classification of the Finite Simple Groups.

\begin{lemma} \label{lemOut}
If $T$ is a finite simple group, then $\Out(T)$ is solvable of derived length at most $3$.
More precisely:
\begin{itemize}
\item If $T$ is cyclic, alternating, or a sporadic simple group, then $\Out(T)$ is abelian;
\item $\dl(\Out(T)) =3$ if and only if $T$ is the orthogonal group $\mathrm{P}\Omega^+_8(q)$ for $q$ odd.
\end{itemize}
\end{lemma}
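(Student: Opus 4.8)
The plan is to go through the list of finite simple groups provided by the Classification. The non-Lie-type cases are immediate: if $T$ is cyclic of prime order $p$ then $\Out(T)=\Aut(T)\cong C_{p-1}$ is abelian; if $T=\Alt(n)$ with $n\geq 5$ then $\Out(T)\cong C_2$ for $n\neq 6$ and $\Out(T)\cong C_2\times C_2$ for $n=6$, both abelian; and if $T$ is a sporadic simple group then a direct inspection (e.g.\ of $\Atlas$) shows $\Out(T)$ is trivial or cyclic of order $2$, again abelian. This establishes the first bullet point and gives $\dl(\Out(T))\leq 1$ for all such $T$.

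Now suppose $T$ is a group of Lie type. Here I would quote the standard description of $\Out(T)$ via diagonal, field and graph automorphisms (see e.g.\ \cite{KL90}): $\Out(T)$ has a normal abelian subgroup $\mathrm{Outdiag}(T)$ of diagonal automorphisms, and $\Out(T)/\mathrm{Outdiag}(T)$ splits as a direct product $\Phi_T\times\Gamma_T$, where $\Phi_T$ is the cyclic group of field automorphisms and $\Gamma_T$, the symmetry group of the (twisted) Dynkin diagram, is isomorphic to $1$, $C_2$ or $S_3$; moreover $\Gamma_T\cong S_3$ only for $T$ untwisted of type $D_4$, i.e.\ for $T=\mathrm{P}\Omega^+_8(q)$, since $D_4$ is the only Dynkin diagram with symmetry group $S_3$. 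Applying twice the elementary inequality $\dl(X)\leq\dl(N)+\dl(X/N)$ for $N\unlhd X$ with $X$ solvable (as used in the proof of Proposition \ref{propGDL}) gives
\[ \dl(\Out(T)) \leq \dl(\mathrm{Outdiag}(T)) + \dl(\Phi_T\times\Gamma_T) \leq 1 + \max\{1,\dl(\Gamma_T)\}. \]
As $\dl(S_3)=2$, this yields $\dl(\Out(T))\leq 3$ always, and $\dl(\Out(T))\leq 2$ unless $\Gamma_T\cong S_3$, i.e.\ unless $T=\mathrm{P}\Omega^+_8(q)$.

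It remains to decide when equality $\dl(\Out(T))=3$ holds, i.e.\ to analyse $T=\mathrm{P}\Omega^+_8(q)$. If $q$ is even then $\mathrm{Outdiag}(T)=1$, so $\Out(T)\cong C_f\times S_3$ with $q=2^f$ and $\dl(\Out(T))=2$. If $q$ is odd then $\mathrm{Outdiag}(T)\cong C_2\times C_2$ and the triality group $\Gamma_T\cong S_3$ acts on it as the full $\Aut(C_2\times C_2)\cong S_3$, permuting its three involutions; hence $\Out(T)$ contains $\mathrm{Outdiag}(T)\rtimes\Gamma_T\cong(C_2\times C_2)\rtimes S_3\cong S_4$, so $\dl(\Out(T))\geq\dl(S_4)=3$, and together with the upper bound $\dl(\Out(T))=3$. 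This proves that $\dl(\Out(T))=3$ precisely for $T=\mathrm{P}\Omega^+_8(q)$ with $q$ odd. The only real work is marshalling the precise structure of $\Out(T)$ for the Lie type groups — in particular that field and graph automorphisms commute, so that $\Out(T)/\mathrm{Outdiag}(T)$ is a genuine direct product rather than a less tractable extension, and the action of the triality group on $\mathrm{Outdiag}(\mathrm{P}\Omega^+_8(q))$ — together with keeping track of the few exceptional isomorphisms among small groups of Lie type (and with alternating or sporadic groups), which never raise the derived length above what the generic analysis predicts.
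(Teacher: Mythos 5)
Your argument is correct, but note that the paper does not prove this lemma at all: it simply cites \cite[Lem. 3.2]{Sab26} as a standard consequence of the Classification, whereas you supply the standard argument in full. Your route is the expected one (and presumably the one behind the cited reference): dispose of the cyclic, alternating and sporadic cases by inspection, then use the Steinberg description $\Out(T)=\mathrm{Outdiag}(T).(\Phi_T\Gamma_T)$ for Lie type, bound the derived length through this normal series, and isolate untwisted $D_4$ as the only case with $\Gamma_T\cong S_3$, finally separating $q$ even (where $\mathrm{Outdiag}=1$ and $\Out\cong C_f\times S_3$ has derived length $2$) from $q$ odd (where triality acts as the full $S_3$ on $\mathrm{Outdiag}\cong C_2\times C_2$, so $\Out$ contains a copy of $S_4$ and $\Out(\POmega_8^+(q))\cong S_4\times C_f$ has derived length $3$). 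One small imprecision: the quotient $\Out(T)/\mathrm{Outdiag}(T)$ is \emph{not} always the direct product $\Phi_T\times\Gamma_T$; for $\Sp_4(2^f)$, $G_2(3^f)$ and $F_4(2^f)$ the extraordinary graph automorphism squares to the generating field automorphism and $\Phi_T\Gamma_T$ is cyclic of order $2f$ rather than $C_f\times C_2$ (see \cite[Th. 2.5.12]{GLS94}). This does not damage your proof, since in those cases the quotient is abelian anyway and your bound $\dl(\Out(T))\leq 1+\max\{1,\dl(\Gamma_T)\}$ still holds, but the direct-product claim as stated should be qualified. With that caveat, your derivation of both bullet points, including the ``only if'' direction (if $\Gamma_T\leq C_2$ then $\Out(T)'$ lies in the abelian group $\mathrm{Outdiag}(T)$, forcing derived length at most $2$), is complete and matches the standard proof the paper delegates to the literature.
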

\begin{proof}
This is standard (for example, see \cite[Lem.~3.2]{Sab26}).
\end{proof}

A finite group $G$ is {\itshape almost simple} if there exists a non-abelian simple group $T$ such that $T \unlhd G \leqslant \Aut(T)$.
By Lemma~\ref{lemOut}, the almost simple groups are nearly perfect in the sense that $\dll(G) \leq 3$, and Theorem~\ref{thMain} says that this property (with $10$ in place of $3$) is inherited by the maximal subgroups.

We now give some preliminaries for the classical groups of Lie type.
We follow the notation used in \cite{KL90}, which we summarize below. Let $V$ be a vector space over a finite field $\mathbb{F}$, and let $\kappa$ be either the zero form, or a non-degenerate unitary, symplectic, or quadratic form on $V$. We say that $\kappa$ is of type $\boldL, \boldU, \boldS$ or $\boldO$, respectively. We may refer to type $\boldU$ as $\boldL^-$. Additionally, type $\boldO$ splits into three further types: $\boldO^+$ and $\boldO^-$, when $\dim(V)$ is even; and $\boldO^\circ$, when $\dim(V)$ is odd. We will refer to $\boldO^\circ$ simply as $\boldO$.
As in \cite[Ch.~2]{KL90}, we define:
\begin{itemize}
    \item $S = S(V, \mathbb{F}, \kappa)$ the group of special $\kappa$-isometries of $V$;
    \item $I = I(V, \mathbb{F}, \kappa)$ the group of $\kappa$-isometries of $V$;
    \item $\Delta = \Delta(V, \mathbb{F}, \kappa)$ the group of $\kappa$-similarities of $V$;
    \item $\Gamma = \Gamma(V, \mathbb{F}, \kappa)$ the group of $\kappa$-semisimilarities of $V$;
    \item If $\kappa$ is of type $\boldO$, let $\Omega = \Omega(V, \mathbb{F}, \kappa)$ be an index $2$ subgroup of $S$, as defined in \cite[Sec.~2.5]{KL90}. Otherwise, let $\Omega = S$;
    \item If $\kappa$ is of type $\boldL$ with $\dim(V) \geq 3$, let $\Sigma = \Sigma(V, \mathbb{F}, \kappa) = \langle \Gamma, \iota \rangle$, where $\iota$ is the inverse transpose map on $\Gamma$. Otherwise, let $\Sigma = \Gamma$. 
\end{itemize}
We have the chain of groups
 $$    \Omega \> \leqslant \> S \> \leqslant \> I \> \leqslant \> \Delta \> \leqslant \>  \Gamma \> \leqslant \> \Sigma, $$
with $\Gamma$ acting on the vector space $V$.
For example, if $V=\mathbb{F}_q^n$ and $\kappa$ is the zero form, then
$\Omega = S = \SL_n(q)$, $I = \Delta = \GL_n(q)$, $\Gamma = \GammaL_n(q)$ and $\Sigma = \langle \Gamma, \iota \rangle$.

For any subgroup $K$ of $\Sigma$, we write $\overline K$ to denote the image of $K$ under the quotient by the scalars of $\Sigma$.
Similarly, for any subgroup $K$ of $\overline \Sigma$, we use $K^\wedge$ to denote the preimage of $K$ in $\Sigma$.

\begin{remark} \label{remark:exceptionalgraphs}
    If $\overline \Omega$ is a simple group, then the group $\overline \Sigma$ is always contained in $\Aut(\overline \Omega)$. In almost all cases $\overline \Sigma = \Aut(\overline \Omega)$, unless $\Omega$ is equal to $\Sp_4(q)$ with $q$ even, or $\Omega_8^+(q)$ for any $q$. Then, $\Aut(\overline \Omega)$ is equal to $\langle \overline \Sigma, \overline \gamma \rangle$, for some graph automorphism $\gamma$.
\end{remark}

\begin{lemma}\label{lemma:dlclassicalgroup}
    If $(V, \mathbb F, \kappa)$ is a classical geometry, then $ \Sigma / \Omega$ and $\overline \Sigma / \overline \Omega$ are solvable.
    If $\overline \Omega$ is simple, then $\dll(\overline \Sigma) \leq 2$. Otherwise, we have
    \[
        \dll( \overline \Sigma ) \> \leq \>
        \begin{cases*}
            3 \qquad & if $\kappa$ is of type $\boldL$ or $\boldS$,\\
            4 & if $\kappa$ is of type $\boldO$,\\
            5 & if $\kappa$ is of type $\boldU$.
        \end{cases*}
    \]
\end{lemma}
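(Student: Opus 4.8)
The plan is to read off the structure of $\overline\Sigma/\overline\Omega$ from the chain
\[
\Omega \> \leqslant \> S \> \leqslant \> I \> \leqslant \> \Delta \> \leqslant \> \Gamma \> \leqslant \> \Sigma .
\]
The first step is to observe that each of $\Omega, S, I, \Delta, \Gamma$ is normal in $\Sigma$: conjugating a $\kappa$-isometry, $\kappa$-similarity or $\kappa$-semisimilarity by a $\kappa$-semisimilarity produces a map of the same kind, and $\Omega$, being the unique subgroup of index $2$ of $S$ in the type-$\boldO$ case, is characteristic in $S$. Hence, after quotienting by the scalars of $\Sigma$, the subgroups $\overline\Omega \leqslant \overline S \leqslant \overline I \leqslant \overline\Delta \leqslant \overline\Gamma \leqslant \overline\Sigma$ form a normal series of $\overline\Sigma$.

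The second step is to record the successive quotients, all of which are read off from \cite[Ch.~2]{KL90} and, up to the identifications induced by scalars, turn out to be cyclic: $\overline S/\overline\Omega$ is trivial, and of order $2$ (spinor norm) exactly for type $\boldO$; $\overline I/\overline S$ is cyclic (determinant) and trivial for type $\boldS$; $\overline\Delta/\overline I$ is cyclic (similarity factor) and trivial for type $\boldL$, as well as for type $\boldU$ since the norm map $\mathbb{F}_{q^2}^\times \to \mathbb{F}_q^\times$ is surjective; $\overline\Gamma/\overline\Delta$ is cyclic (field automorphisms); and $\overline\Sigma/\overline\Gamma$ is trivial, and of order $2$ (the map $\iota$) exactly for type $\boldL$ with $\dim V \geq 3$. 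In particular all factors are abelian, so $\overline\Sigma/\overline\Omega$ is solvable; and since, by Corollary \ref{corExt}, its derived length is bounded by the number of nontrivial factors in this series, tallying these for each fixed type gives upper bounds of at most $3$ for types $\boldL$ and $\boldS$, at most $4$ for type $\boldO$, and at most $5$ for type $\boldU$.

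For the stronger bound $2$ when $\overline\Omega$ is simple I would argue separately. Then $\overline\Sigma/\overline\Omega$ embeds in $\Out(\overline\Omega)$, which by Lemma \ref{lemOut} has derived length at most $3$, and derived length exactly $3$ only for $\overline\Omega = \mathrm{P}\Omega_8^+(q)$ with $q$ odd. In that single case, the triality automorphisms of order divisible by $3$ do not belong to $\overline\Sigma$ by Remark \ref{remark:exceptionalgraphs}, so $\overline\Sigma/\overline\Omega$ is the subgroup of $\Out(\mathrm{P}\Omega_8^+(q))$ generated by the diagonal, field and order-$2$ graph automorphisms; a direct inspection of its structure — an abelian normal subgroup (the diagonal part) with abelian quotient (the field and graph part) — shows it is metabelian. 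Thus $\dl(\overline\Sigma/\overline\Omega) \leq 2$ whenever $\overline\Omega$ is simple.

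The step I expect to be most delicate is the second one. One has to pin down precisely which of the five factors survives, and with what cyclic structure, in every type, and in particular to treat carefully the low-dimensional geometries in which $\overline\Omega$ fails to be simple — such as $\mathrm{P}\Omega_4^+(q)$, $\Sp_2(q)$, $\SU_2(q)$ or $\SU_3(2)$ — where there is no simple group whose outer automorphism group one could invoke as a shortcut. The chain argument still applies there verbatim, but it requires checking that passing from $\Sigma$ to $\overline\Sigma$ does not inflate any factor beyond the cyclic groups listed above, so that the count of nontrivial factors, and hence the claimed derived-length bound, is not disturbed.
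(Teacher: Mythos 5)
Your proof is correct, but it takes a genuinely different route from the paper's. The paper handles the simple case by quoting the classification of $\overline \Sigma/\overline \Omega$ in \cite[Ch. 2]{KL90} to get $\dl \leq 2$, and in the non-simple case it runs through the short list of possibilities for $\overline \Omega$ in \cite[Prop. 2.9.1]{KL90}, bounding $\dll(\overline \Omega)$ and $\dl(\overline \Sigma/\overline \Omega)$ case by case and combining them via Proposition \ref{propGDL}; the constants $3,4,5$ are visibly calibrated to absorb $\dll(\overline \Omega)$ as well, which is how the lemma gets used later (e.g.\ in the $\mathscr{C}_1$ analysis), rather than being sharp bounds on the quotient alone. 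You instead bound the literal quotient uniformly through the normal series $\overline \Omega \leqslant \overline S \leqslant \overline I \leqslant \overline \Delta \leqslant \overline \Gamma \leqslant \overline \Sigma$, whose factors are quotients of the cyclic groups given by the spinor norm, determinant, similarity factor, field automorphisms and $\iota$, and then apply Corollary \ref{corExt}. This avoids any case analysis of small $\overline \Omega$ (since the chain argument is insensitive to what $\overline \Omega$ is), and in fact yields sharper constants than stated: at most $3$ for type $\boldL$, at most $2$ for types $\boldS$ and $\boldU$ (using $\overline \Delta = \overline I$ in the unitary case), and at most $4$ for type $\boldO$, which certainly implies the inequalities of the lemma. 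Your treatment of the simple case via $\Out(\overline \Omega)$, Lemma \ref{lemOut} and a separate metabelian analysis of the triality-free part of $\Out(\mathrm{P}\Omega_8^+(q))$ for $q$ odd (legitimate by Remark \ref{remark:exceptionalgraphs}) is also sound; alternatively, your own chain gives $\dl \leq 2$ there directly, since $\overline I/\overline \Omega \leqslant 2^2$ is abelian and $\overline \Sigma/\overline I$ is abelian in type $\boldO$. The points you flag as delicate (normality of each term of the chain under semisimilarities and $\iota$, the factors only shrinking when passing to the barred groups, and the degenerate low-dimensional geometries) are exactly the right ones, and none of them disturbs the factor count.
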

\begin{proof}
    In \cite[Ch.~2]{KL90} the groups $ \Sigma /  \Omega$ are classified, and it is easy to see that they are solvable with $\dl(\overline \Sigma/\overline \Omega) \leq 2$. This proves the result in the case where $\overline \Omega$ is simple.
    If $\overline \Omega$ is not simple, then the possibilities for $\overline \Omega$ are indicated in \cite[Prop.~2.9.2]{KL90}. In the case that $q \leq 3$, we check that $\dll(\overline \Sigma)$ satisfies the given bound computationally, see Table~\ref{table:dllnonsimpleomega}. If instead $q > 3$, then $\Omega$ must be one of $\SL_1(q)$, $\Omega_2^{\pm}(q)$ or $\Omega_4^+(q)$. In the first two cases $\Omega$ is cyclic, so $\dl(\overline \Omega) \leq 1$ and hence $\dl(\overline \Sigma) \leq 3$ by Proposition~\ref{propGDL}. Finally, if $\Omega$ is equal to $\Omega_4^+(q)$, then $\Omega$ is isomorphic to $\SL_2(q) \circ \SL_2(q)$, which is perfect since $q > 3$. Thus, $\overline \Omega = \overline \Sigma^{(\infty)}$ and $\dll(\overline \Sigma) = \dl(\overline \Sigma/\overline \Omega) \leq 2$.

    \begin{table}[]
        \centering
        \begin{tabular}{llrr}
            Case & $(n, q)$ & $\dll(\overline I)$ & $\dl(\overline \Sigma / \overline I)$ \\ \toprule
            $\boldL$ & $(2,2)$ & 2 & 0 \\
            & $(2,3)$ & 3 & 0 \\
            $\boldU$ & $(3,2)$ & 4 & 1 \\
            $\boldS$ & $(4,2)$ & 1 & 1 \\
            $\boldO$ & $(2,q,\pm)$ & 2 & 1 \\
            & $(4,2,+)$ & 3 & 0 \\
            & $(4,3,+)$ & 3 & 1 \\ \bottomrule
        \end{tabular}
        \caption{The derived length of $\overline I$ and $\overline \Sigma / \overline I$ in the case that $\overline \Omega$ is not simple and $q \leq 3$.}
        \label{table:dllnonsimpleomega}
    \end{table}
\end{proof}

\vspace{0.1cm}
\section{Proof of Theorem~\ref{thMain}} \label{sec3}

Let $G$ be an almost simple group with socle $T$,
and let $H$ be a maximal subgroup of $G$.
We show that $\dll(H) \leq 10$.
By Lemma~\ref{lemOut} we have that $H/T \cap H$ is solvable (as a subgroup of $G/T$), and so $H^{(\infty)} \subseteq T \cap H$. It follows by Lemma~\ref{lemSub} and Proposition~\ref{propGDL} that
$$ \dll(T \cap H) \> \leq \>
\dll(H)
\> \leq \> \dll(T \cap H) + \dl(G/T) . $$
In most cases, it will be sufficient to use the weaker bound
$$ \dll(H) \> \leq \> \dll(T \cap H) + \dl( \Out(T) ) . $$
Note that, in general, $T \cap H$ is not a maximal subgroup of $T$.

Using the Classification of the Finite Simple Groups,
we distinguish various cases.

\subsection{$T$ is alternating} 

We address this case with the following proposition.
To simplify the argument, we use the main theorem of \cite{LPS87}.

\begin{proposition}
Let $G=\Sym(n)$ or $G=\Alt(n)$ and let $H$ be a maximal subgroup of $G$.
Then $\dll(H) \leq 6$, and $\dll(H) \leq 4$ with finitely many exceptions.
\end{proposition}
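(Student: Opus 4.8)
The plan is to invoke the O'Nan--Scott theorem for the almost simple group $G=\Sym(n)$ or $\Alt(n)$, which classifies the maximal subgroups of $G$ into a small number of families: intransitive subgroups $(\Sym(k) \times \Sym(n-k)) \cap G$; imprimitive subgroups $(\Sym(k) \wr \Sym(n/k)) \cap G$; affine subgroups $\AGL_d(p) \cap G$ with $n = p^d$; diagonal-type subgroups involving $T^k.(\Out(T) \times \Sym(k))$ with $n = |T|^{k-1}$; product-type subgroups $(\Sym(m) \wr \Sym(k)) \cap G$ with $n = m^k$; and almost simple primitive subgroups. For each family I want to bound $\dll(H)$ by a small absolute constant, showing the bound is at most $4$ except in the wreath-product cases where an extra summand coming from $\dl(\Sym(k/?))$ or $\dl(\Sym(n/k))$ can push it as high as $6$, but only for bounded $n$.

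First I would dispose of the most dangerous family, the imprimitive and product-type subgroups of the form $(\Sym(m) \wr \Sym(k)) \cap G$. Here $H$ has a normal subgroup $B \leq \Sym(m)^k$ with quotient a subgroup of $\Sym(k)$, so by Proposition~\ref{propGDL} we get $\dll(H) \le \dll(\Sym(m)^k) + \dl(\Sym(k)) \le \dll(\Sym(m)) + \dl(\Sym(k))$, since $\dll$ of a direct power equals $\dll$ of a single factor (a product of perfect subgroups is perfect, and the perfect core of a direct power is the direct power of the perfect cores). Now $\dll(\Sym(m)) \le 2$ always, and $\dl(\Sym(k)) \le 2$ once $k \le 4$, so the only way to exceed $4$ is to have $k \ge 5$; but then $m \ge 2$ forces $n = m^k \ge 2^5 = 32$ in the product case, while in the imprimitive case $mk = n$, and a large $k$ again forces large $n$ only if... in fact $\dl(\Sym(k))$ is unbounded, so these give arbitrarily large derived length for unbounded $n$ --- wait, no: the point is that $H$ need not contain the socle, but $\Sym(k)$ sitting on top still has unbounded derived length. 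The resolution is that these are \emph{not} exceptions to $\dll(H)\le 10$ because the relevant bound is on the \emph{derived length} only via Proposition~\ref{propGDL}, and here $\dll(H) \le \dll(\Sym(m)^k) + \dl(\Sym(k))$ with the second term genuinely unbounded --- so I must instead observe that for $k \ge 7$ one has $\dl(\Sym(k)) \le \lceil \log_2 k \rceil + $ const is false; rather $\dl(\Sym(k))$ grows like $\log k$... Actually this is the crux: I would use the classical fact that $\dl(\Sym(k)) \le \lfloor \log_2 k \rfloor + 1$ is \emph{wrong} — $\Sym(k)$ is not solvable for $k \ge 5$. So $\dll(\Sym(k))$ must be used: $\dll(\Sym(k)) = \dl(\Sym(k)/\Alt(k)) = 1$ for $k \ge 5$. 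Thus in fact $\dll(H) \le \dll(\Sym(m)^k) + \dll(\Sym(k)) \le 2 + 1 = 3$ for $k\ge 5$, and one checks small $k \le 4$ separately, where $\dl(\Sym(k)) \le 2$, giving $\dll(H) \le 4$; the exceptional value $6$ arises only from very small configurations (e.g.\ $n \le$ some explicit bound) where $m$ and $k$ are both small and nonsolvable behaviour has not yet kicked in but $\dl(\Sym(4)\wr\Sym(2))$-type groups can reach $3+1$ or so --- these are finitely many.

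For the remaining families the bounds are easier: intransitive $H = (\Sym(k)\times\Sym(n-k))\cap G$ gives $\dll(H) \le \dll(\Sym(k)\times\Sym(n-k)) \le 2$; affine $H \le \AGL_d(p)$ gives a normal elementary abelian $p$-group with quotient in $\GL_d(p)$, so $\dll(H) \le 1 + \dll(\GL_d(p))$, and since $\GL_d(p)$ is almost simple-ish with $\dll \le 2$ for $d \ge 2$ (it has $\SL_d(p)$ or a small-dimensional exception) one gets $\dll(H) \le 3$ after handling $\GL_1, \GL_2, \GL_3$ and the non-simplicity of $\mathrm{PSL}_2(2), \mathrm{PSL}_2(3)$ by hand (finitely many $n$); diagonal-type contributes $\dll(T^k.(\Out(T)\times\Sym(k))) \le \dll(T) + \dl(\Out(T)) + \dll(\Sym(k)) \le 0 + 3 + 1 = 4$ by Lemma~\ref{lemOut} and Proposition~\ref{propGDL}; and the almost simple primitive maximal subgroups $H$ with socle $S$ satisfy $\dll(H) \le 3$ directly by Lemma~\ref{lemOut}. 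Assembling these, every family yields $\dll(H) \le 4$ apart from an explicit finite list of small-degree configurations (coming from wreath products and small linear groups) where the constant $6$ is needed, which one verifies by a finite computation. The main obstacle is precisely the careful bookkeeping in the wreath-product cases: one must be sure that the potentially unbounded $\dl(\Sym(k))$ is always replaceable by the bounded $\dll(\Sym(k))$ via Proposition~\ref{propGDL}, and then pin down the finite exceptional set by checking small $n$ explicitly (for instance with \textsc{Magma} or \textsc{GAP}).
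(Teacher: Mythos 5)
Your route is the same as the paper's: reduce to a maximal subgroup $X$ of $\Sym(n)$ (so that $\dll(H)\le\dll(X)$), split $X$ into intransitive, imprimitive and, via the O'Nan--Scott theorem, affine, product-action, diagonal and almost simple cases, and bound each family with Proposition \ref{propGDL} and Lemma \ref{lemOut}; your eventual observation that the top group of a wreath product must be handled through $\dll(\Sym(d))=1$ for $d\ge 5$, rather than through its derived length, is exactly the paper's mechanism. However, your small-case arithmetic is wrong in a way that touches the constants in the statement: $\dl(\Sym(4))=\dll(\Sym(4))=3$, not $2$. Consequently the intransitive bound is $3$ (not $2$), and in the wreath-product cases with $m,d\le 4$ the bound from Proposition \ref{propGDL} is $\dll(\Sym(m))+\dll(\Sym(d))\le 3+3=6$, which is actually attained for $\Sym(4)\wr\Sym(4)$ (degree $16$ imprimitive, degree $4^4$ in product action). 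Your claimed ``$\le 2+1=3$ for $k\ge 5$ and $\le 2+2=4$ for $k\le 4$'' would wrongly eliminate precisely the configurations that force the constant $6$, and it sits inconsistently next to your own later concession that $6$ is needed in finitely many small cases; with the corrected values of $\dll(\Sym(4))$ the same case analysis closes and reproduces the paper's constants.

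Two further points you leave implicit. When $G=\Alt(n)$ and $H=X\cap G$, the inequality $\dll(H)\le\dll(X)$ is not automatic, since $\dll$ can increase on subgroups; it holds because $|X:H|\le 2$, so $X^{(\infty)}\subseteq X'\subseteq H$ and Lemma \ref{lemSub} applies --- the paper says this explicitly, citing \cite{LPS87} for the fact that every maximal $H$ is either almost simple or of the form $X\cap G$ with $X$ maximal in $\Sym(n)$. And in the affine case your uniform ``$\le 3$ after hand checks'' is too optimistic: $\AGL_2(3)$ has derived length $5$, so degree $9$ is genuinely one of the finitely many exceptions to the bound $4$ (the paper records $\dll\le 5$ here, with $\le 4$ once $p^d>9$). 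Neither point changes the truth of the statement, but both belong in a complete proof.
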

\begin{proof}
If $H$ is almost simple, then $\dll(H) \leq 3$ by Lemma~\ref{lemOut}.
Otherwise, by \cite{LPS87}, $H = X \cap G$ where $X$ is a maximal subgroup of $\Sym(n)$ which is not almost simple. If $X \neq H$, then $|X:H| =2$ and $X' \subseteq H$.
In general, we always have $\dll(H) \leq \dll(X)$ by Lemma~\ref{lemSub}.

If $X$ is intransitive, then $X = \Sym(k) \times \Sym(n-k)$ for some $k \geq 1$. Thus $\dll(X) = \max \{ \dll(\Sym(k),\dll(\Sym(n-k))) \} \leq 3$.

If $X$ is transitive but not primitive, then $n=md$, $X= \Sym(m) \wr_d \Sym(d)$ and by Proposition~\ref{propGDL} we have $\dll(X) \leq \dll(\Sym(m)) + \dll(\Sym(d)) \leq 6$
(equality is attained for $m=d=4$).
Moreover $\dll(H) \leq 4$ if $md > 16$
(equality is attained for $d=4$ and $m \geq 5$).

If $X$ is primitive,
then by the O'Nan-Scott theorem \cite[Sec.~4]{DM96} we have the following possibilities:
\begin{enumerate}
    \item[$\bullet$] \textbf{Affine.}
    In this case $n=p^d$ and $X = \AGL_d(p)$.
    Therefore $\dll(X) \leq 5$, and $\dll(X) \leq 4$ if $p^d >3^2$;

    \item[$\bullet$] \textbf{Product action.}
    In this case $n=m^d$ and $X = \Sym(m) \wr_d \Sym(d)$.
    Therefore $\dll(X) \leq 6$ and $\dll(X) \leq 4$ if $m^d > 4^4$;
    
    \item[$\bullet$] \textbf{Diagonal type.}
    In this case $n=|S|^{m-1}$, where $S$ is a non-abelian simple group, and $X = S^m.(\Out(S) \times \Sym(m))$. Hence, $\dll(X) \leq 3$.
\end{enumerate}

The proof is complete.
\end{proof}

In the exceptional case where $T=\Alt(6)$, it can be checked easily that $\dll(H) \leq 3$.

\subsection{$T$ is a classical group} 

Let $T$ be a classical simple group over $\mathbb{F} = \mathbb F_{q^u}$ where $q=p^r$ and $u$ is either $2$ if $T$ is of type $\boldU$, or $1$ otherwise.
Then there exists a classical geometry $(V, \mathbb{F}, \kappa)$ such that $T = \overline \Omega(V, \mathbb{F}, \kappa)$, and $\overline \Omega \leqslant G \leqslant \Aut(\overline \Omega)$. We assume first that $G \subseteq \overline \Sigma$, and treat the exceptional cases at the end of this section.
We split into different types of maximal subgroups $H<G$ by using Aschbacher's Theorem (see \cite[Ch.~3]{KL90}).
 We assume that $H$ is not solvable, as this case is already treated in \cite[Th.~1.4]{Sab26}.
Finally, we delay the case where $H$ is parabolic to Section~\ref{sec3.4}, where we are able to consider all groups of Lie type simultaneously.

 \begin{enumerate}
    \item $\mathscr C_1$: We only treat the non-parabolic cases.
    Suppose $H$ is of type $\mathscr C_1$. We assume first that $G$ is of type $\boldL$, and $H$ is of type $\GL_m(q) \oplus \GL_{n-m}(q)$, so $H$ stabilizes a vector space decomposition $V=V_1 \oplus V_2$ with $\dim(V_1) = m$ and $\dim(V_2) = n-m$. Recall that $H^\wedge$ denotes the preimage of the subgroup $H$ of $\overline \Sigma$ in $\Sigma$. Then, we have that
    \[
        \SL_m(q) \times \SL_{n-m}(q) 
        \leqslant H^\wedge \cap \Gamma \leqslant
        \GammaL_m(q) \times \GammaL_{n-m}(q).
    \]
    Since $\GammaL_i(q)/\SL_i(q)$ is solvable for all $i$, this implies that $(\GammaL_m(q) \times \GammaL_{n-m}(q))^{(\infty)}$ is contained in $H^\wedge \cap \Gamma$. As such, by Lemma~\ref{lemSub},
    \begin{align*}
        \dll(H \cap \overline \Gamma) & \leq \dll(H^\wedge \cap \Gamma)\\
        & \leq \dll(\GammaL_m(q) \times \GammaL_{n-m}(q))\\
        & \leq \max(\dll(\GammaL_m(q)), \dll(\GammaL_{n-m}(q))) \\
        & \leq 4,
    \end{align*}
    where the final inequality is by Lemma~\ref{lemma:dlclassicalgroup}. As such, by Proposition~\ref{propGDL},
    \[
        \dll(H) \leq \dll(H \cap \overline \Gamma) + \dl(\overline \Sigma/\overline \Gamma) \leq 5.
    \]
    
    Assume that $H$ is of type $\GU_m(q) \perp \GU_{n-m}(q)$, $\Sp_m(q) \perp \Sp_{n-m}(q)$ or $\O_m^{\epsilon_1}(q) \perp \O_{n-m}^{\epsilon_2}(q)$. In these cases $G$ is not of type $\boldL$, and as such $\Sigma = \Gamma$. There exist non-degenerate vector subspaces $V_1, V_2$ of $V$ with $V = V_1 \perp V_2$, and $H$ equal to $N_G(V_1)$. Define forms $\kappa_i$ on each $V_i$ by restricting $\kappa$ to $V_i$. Then,
    \[
        \Omega(V_1, \mathbb F, \kappa_1) \times \Omega(V_2, \mathbb F, \kappa_2)
        \leqslant H^{\wedge} \leqslant
        \Gamma(V_1, \mathbb F, \kappa_1) \times \Gamma(V_2, \mathbb F, \kappa_2).
    \]
    The groups $\Gamma(V_i, \mathbb F, \kappa_i)/\Omega(V_i, \mathbb F, \kappa_i)$ are solvable, and hence
    \[
        \dll(H) \leq \dll(H^\wedge) \leq \max_i \left( \dll(\Gamma(V_i, \mathbb F, \kappa_i))\right) \leq 6.
    \]
    
    Finally, if $H$ is of type $\Sp_{n-2}(q)$ then $H$ is a classical group of type $\boldS$, and so $\dll(H) \leq 3$.
    
    \item $\mathscr C_2$: Suppose $H$ is of type $\GL_m(q) \wr_t \Sym(t)$. Then $H$ stabilizes a vector space decomposition $V=V_1 \oplus \cdots \oplus V_t$ with $\dim(V_i) = m$ for all $i$. Thus,
    \[
        \SL_m(q) \wr_t \Alt(t) \leqslant H^{\wedge} \cap \Gamma \leqslant
        \GammaL_m(q) \wr_t \Sym(t).
    \]
    Let $J$ be the projection of $H$ to $\Sym(t)$, which must be either $\Alt(t)$ or $\Sym(t)$. Hence, by Proposition~\ref{propGDL},
    \[
        \dll(H^\wedge \cap \Gamma) \leq \dll(\GammaL_m(q)^t \cap H) + \dll(J).
    \]
    However, $\GammaL_m(q)^t \cap H$ contains $\SL_m(q)^t$, and hence contains $(\GammaL_m(q)^t)^{(\infty)}$. Thus, by Lemma~\ref{lemSub},
    \begin{align*}
        \dll(H^\wedge \cap \Gamma) & \leq \dll(\GammaL_m(q)^t) + \dll(J) \\
        & \leq \dll(\GammaL_m(q)) + \dll(\Sym(t)).
    \end{align*}
    If $\PSL_m(q)$ is a simple group, then $\dll(\GammaL_m(q)) \leq 3$ by Lemma~\ref{lemma:dlclassicalgroup}. Since $\dll(\Sym(t)) \leq 3$ for all $t$, this implies that $\dll(H^{\wedge} \cap \Gamma) \leq 6$. On the other hand, if $\PSL_m(q)$ is not simple we have that $\dll(\GammaL_m(q)) \leq 4$. However, in this case we may assume that $\Alt(t)$ is simple, so that $H$ is non-solvable, and hence $\dll(\Sym(t)) = 1$. Hence we have that $\dll(H^\wedge \cap \Gamma) \leq 5$. Thus, by Proposition~\ref{propGDL},
    \[
        \dll(H) \leq \dll(H^{\wedge}) \leq \dll(H^{\wedge} \cap \Gamma) + \dl(\Sigma/\Gamma) \leq 7.
    \]

    Suppose next that $H$ is of type $\GU_m(q) \wr_t \Sym(t)$, $\Sp_m(q)\wr_t \Sym(t)$, or $\O_m^\epsilon(q) \wr_t \Sym(t)$. Note that $G$ is not of type $\boldL$ in this case. Then there exist some pairwise isometric, non-degenerate vector spaces $V_1, \dots, V_t$ of $V$ such that $V = V_1 \perp \cdots \perp V_t$, with $H$ equal to $N_G\{V_1, \dots, V_t\}$. Define forms $\kappa_i$ on each $V_i$ by restricting $\kappa$ to $V_i$, and let $\Omega_i = \Omega(V_i, \mathbb F, \kappa_i)$, and $\Gamma_i = \Gamma(V_i, \mathbb F, \kappa_i)$ for each $i$. Note that all of the groups $\Omega_i$ are isomorphic, as are all of the groups $\Gamma_i$. Then,
    \[
        \Omega_1 \wr_t \Alt(t) 
        \leqslant H^{\wedge} \leqslant
        \Gamma_1 \wr_t \Sym(t).
    \]
    Hence, similarly to the $\boldL$ case,
    \begin{align*}
        \dll(H^{\wedge}) & \leq \dll(\Gamma_1) + \dll(\Sym(t)) \\
        & \leq \begin{cases*}
            3 + 3 & if $\overline \Omega_1$ is simple\\
            6 + 1 & if $t \geq 5$.
        \end{cases*}
    \end{align*}
    So $\dll(H) \leq 7$, assuming $H$ is non-solvable.

    If $H$ is of type $\GL_{n/2}(q^u).2$, then $\SL_{n/2}(q^u) \leqslant H^\wedge \cap \Omega \leqslant \GL_{n/2}(q^u).2$, and so by Proposition~\ref{propGDL} and Lemma~\ref{lemma:dlclassicalgroup} we have that 
    \begin{align*}
        \dll(H) & \leq \dll(H^\wedge \cap \Omega) + \dl(\overline \Sigma/\overline \Omega) \\
        & \leq \dll(\GL_{n/2}(q^u)) + \dl(C_2) + \dl(\overline \Sigma/\overline \Omega)\\
        & \leq 4 + 1 + 2.
    \end{align*}

    If $H$ is of type $\O_{n/2}(q)^2$, then $H^\wedge$ is contained in $\GammaO_{n/2}(q)^2 : C_2$ and contains $\Omega_{n/2}(q)^2$, so as previously we have that $\dll(H^\wedge) \leq 6$.
    
    \item $\mathscr C_3$: For $H$ in this class there exists a field extension $\mathbb F_\sharp$ of $\mathbb F$ and a classical geometry $(V_\sharp, \mathbb{F}_\sharp, \kappa_\sharp)$ such that
    \[
        \Omega(V_\sharp, \mathbb F_\sharp, \kappa_\sharp) 
        \leqslant H^\wedge \cap \Gamma \leqslant
        \Gamma(V_\sharp, \mathbb{F}_\sharp, \kappa_\sharp).
    \]
    When $G$ is not of type $\boldL$, we have that $\Sigma = \Gamma$, and so $H^{\wedge} = H^\wedge \cap \Gamma$. Thus, by Lemmas \ref{lemSub} and \ref{lemma:dlclassicalgroup},
    \[
        \dll(H) \leq \dll(\Gamma(V_\sharp, \mathbb{F}_\sharp, \kappa_\sharp)) \leq 6.
    \]
    On the other hand, if $G$ is of type $\boldL$, then $\Sigma = \Gamma :2$. Note that, by \cite[Sec.~4.3]{KL90}, the geometry $(V_\sharp, \mathbb F_\sharp, \kappa_\sharp)$ is also of type $\boldL$ in this case. Hence, by Lemma~\ref{lemma:dlclassicalgroup} and Proposition~\ref{propGDL},
    \[
        \dll(H) \leq \dll(\Gamma(V_\sharp, \mathbb{F}_\sharp, \kappa_\sharp)) + 1 \leq 5.
    \]
    
    \item $\mathscr C_4$: Assume $H$ is of type $\GL_{n_1}(q) \otimes \GL_{n_2}(q)$, where $n_1 n_2 = n$. Then, $H$ stabilizes a tensor product decomposition $V=V_1 \otimes V_2$, with $\dim(V_i) = n_i$. We have
    \[
        \PSL_{n_1}(q) \times \PSL_{n_2}(q) 
        \leqslant H \cap \overline \Gamma \leqslant
        \PGammaL_{n_1}(q) \times \PGammaL_{n_2}(q).
    \]
    Thus, as in the $\mathscr{C}_1$ case, this implies that $\dll(H \cap \overline \Gamma) \leq 3$ and so $\dll(H) \leq 4$.

    If $H$ is instead of type $\GU_{n_1}(q) \otimes \GU_{n_2}(q)$, $\Sp_{n_1}(q) \otimes \O_{n_2}^\epsilon (q)$, $\Sp_{n_1}(q) \otimes \Sp_{n_2} (q)$ or $\O_{n_1}^{\epsilon_1}(q) \otimes \O_{n_2}^{\epsilon_2} (q)$, then $H$ stabilizes a vector space decomposition $V=V_1 \otimes V_2$ for some vector spaces $V_1, V_2$ with forms $\kappa_i$ on each $V_i$. Thus,
    \[
        \overline \Omega(V_1, \mathbb F, \kappa_1) \times \overline \Omega(V_2, \mathbb F, \kappa_2) 
        \leqslant H \leqslant
        \overline \Gamma(V_1, \mathbb F, \kappa_1) \times \overline \Gamma(V_2, \mathbb F, \kappa_2).
    \]
    We hence obtain that $\dll(H) \leq 6$, as before.
    
    \item $\mathscr C_5$: For $H$ in this class, there exists a subfield $\mathbb F_\sharp$ of $\mathbb F$ and a classical geometry $(V_\sharp, \mathbb{F}_\sharp, \kappa_\sharp)$ as described in \cite[Sec.~4.5]{KL90}, such that
    \[
        \overline \Omega(V_\sharp, \mathbb{F}_\sharp, \kappa_\sharp) 
        \leqslant H \cap \overline \Delta \leqslant
        \overline \Delta(V_\sharp, \mathbb{F}_\sharp, \kappa_\sharp)
    \]
    So $\dll(H \cap \overline \Delta) \leq 5$ and hence $\dll(H) \leq 6$.
    
    \item $\mathscr C_6$: Recall we exclude any types for which $H$ is always solvable. If $H$ is of type $(4 \circ 2 ^{1+2m}). \Sp_{2m}(2)$ with $m > 1$, then $H \leqslant (2^{2m} . \Sp_{2m}(2)) \times 2$. If $m=2$, then $\Sp_4(2)' = \Alt(6)$, and $\Alt(6)$ acts irreducibly on the natural module of $\Sp_4(2)$. Hence, $H^{(\infty)}$ contains $2^4.\Alt(6)$, so $\dll(H) \leq 1$. Otherwise, if $m \geq 3$, then $H^{(\infty)}$ contains $2^{2m}.\Sp_{2m}(2)$, so $\dll(H) \leq 1$. 
    
    Similarly, if $H$ is of type $r^{1+2m}.\Sp_{2m}(r)$, with $r$ an odd prime and $m > 1$, or if it is of type $2_\pm^{1+2m}.\O_{2m}^\pm(2)$ with $m > 2$, then $\dll(H) \leq 1$. 
    
    \item $\mathscr C_7$: Assume first that $H$ is of type $\GL_m(q) \wr_t \Sym(t)$. This implies that $H$ stabilizes a tensor product decomposition $V = V_1 \otimes \cdots \otimes V_t$ for some vector spaces $V_i$ with $\dim(V_i) = m$. Note that $\PSL_m(q)$ must be simple in this case. Then, 
    \[
        \PSL_m(q) \wr_t \Alt(t) 
        \leqslant H \cap \overline \Gamma \leqslant
        \PGammaL_m(q) \wr_t \Sym(t).
    \]
    Thus, as in the $\mathscr C_2$ case, we can obtain that $\dll(H) \leq 6$. Similarly, for the other types of $H$ in this class, $H$ stabilizes a decomposition $V=V_1 \otimes \cdots \otimes V_t$. For each $i$ we can define a form $\kappa_i$ on $V_i$ as in \cite[Section 4.7]{KL90}, with $V_1, \dots, V_t$ pairwise similar. Then,
    \[
        \overline \Omega(V_1, \mathbb F, \kappa_1) \wr_t \Alt(t) 
        \leqslant H \leqslant
        \overline \Gamma(V_1, \mathbb F, \kappa_1) \wr_t \Sym(t).
    \]
    Note that $\overline \Omega(V_1, \mathbb F, \kappa_1)$ must be non-solvable, by definition. Hence $\dll(H) \leq 5$.
    
    \item $\mathscr C_8$: Let $(V_\sharp, \mathbb{F}_\sharp, \kappa_\sharp)$ be a classical geometry as described in \cite[Sec.~4.8]{KL90}, and let $\Gamma_\sharp = \Gamma(V_\sharp, \mathbb{F}_\sharp, \kappa_\sharp)$. Then, $H \leqslant \overline \Gamma_\sharp . 2$, and hence $\dll(H) \leq 6$.
    
    \item $\mathscr S$: In this case $\dll(H) \leq 3$ by Lemma~\ref{lemOut}.
\end{enumerate}

Finally, we consider the case where $G$ is not a subgroup of $\overline \Sigma$. As mentioned in Remark~\ref{remark:exceptionalgraphs}, we must have that $T =  \overline \Omega$ is equal to $\PSp_4(q)$ or $\POmega_8^+(q)$. Additionally, the group $\Aut(\overline \Omega)$ is generated by $\overline \Sigma$ and a graph automorphism of order $2$ or $3$ respectively. 

If $H \cap \overline \Sigma$ is solvable, then so is $H$ and we are done. If $H \cap \overline \Sigma$ is one of the non-solvable groups considered above, then we have proven that $\dll(H \cap \overline \Sigma) \leq 7$, and hence $\dll(H) \leq 8$. Thus, we can assume that $H \cap \overline \Sigma$ is not in the classes $\mathscr{C}_1$ to $\mathscr{C}_8$, nor $\mathscr{S}$.

We use \cite{BHR13} to study the remaining cases.
If $T = \PSp_4(q)$, then $H$ is solvable.
Hence we assume $T = \POmega_8^+(q)$. Let $d=(q-1,2)$.
We have the following possibilities for $H^{\wedge} \cap \Omega$, assuming it is non-solvable and not contained in $\mathscr C_1$-$\mathscr C_8$ nor $\mathscr S$: $d \times G_2(q)$; $(\Omega_2^\pm(q) \times \frac{1}{d} \GL_3(q)).[2d]$; and $2^4.2^6.\L_3(2)$. We treat each case individually.

If $H^\wedge \cap \Omega = d \times G_2(q)$, we can assume that $G_2(q)$ is not solvable, and hence $\dll(H^{\wedge} \cap \Omega) =1$. Thus, by Proposition~\ref{propGDL} and Lemma~\ref{lemOut}, $\dll(H) \leq 4$.
If $H^\wedge \cap \Omega = (\Omega_2^\pm(q) \times \frac{1}{d} \GL_3(q)).[2d]$, then $\dll(H \cap \overline \Omega) \leq 3$ and so $\dll(H) \leq 6$. If $H^\wedge \cap \Omega = 2^4.2^6.\L_3(2)$, then $\dll(H \cap \overline\Omega) \leq 2 + \dll(\L_3(2)) = 2$, and so $\dll(H) \leq 5$.\\

To conclude this part, we observe that there are infinitely many almost simple classical groups $G$ and maximal subgroups $H$ with $\dll(H)=6$:

\begin{example}
    Let $G = \Aut(\PSU_n(2))$ with $n \geq 7$, $3 \nmid n$,
    and let $H$ be a maximal subgroup of $G$ of type $\GU_3(2) \perp \GU_{n-3}(2)$.
    In particular, there exists a non-degenerate subspace $V_1$ of dimension $3$ such that $H = N_G(V_1)$.
    Letting $V_2 = V_1^\perp$,
    there exist unitary forms $\kappa_i$ on $V_i$,
    given simply by restricting $\kappa$ to $V_i$.
    Let $I_i = I(V_i, \mathbb F, \kappa_i)$ and $\Gamma_i = \Gamma(V_i, \mathbb F, \kappa_i)$. Note that $\Gamma_1 \cong \GU_3(2):C_2$, and $\dl(\Gamma_1) = 6$.
    Then
    $I_1 \times I_2 \leqslant H^\wedge \leqslant \Gamma_1 \times \Gamma_2$,
    as a subdirect product,
    where the scalars of $G$ in $H$ are $\diag(Z(I_1) \times Z(I_2))$.
    Since $3 \nmid n$, the group $Z(\SU_{n-3}(2))$ is trivial, and thus $\GU_{n-3}(2) = \SU_{n-3}(2) : Z(I_2)$. This implies that both $H^\wedge$ and $H$ have perfect cores isomorphic to $\PSU_{n-3}(2)$. Moreover, $H^{(5)}$ is isomorphic to
    \begin{align*}
        \left( \frac{H^\wedge}{\diag(Z(I_1) \times Z(I_2))} \right)^{(5)} & = \frac{\diag(Z(I_1) \times Z(I_2)) (H^\wedge)^{(5)}}{\diag(Z(I_1) \times Z(I_2))}\\
        & = \frac{Z(I_1) \times \GU_{n-3}(2)}{\diag(Z(I_1) \times Z(I_2))}.
    \end{align*}
    As such, $H^{(5)}$ is not isomorphic to $H^{(\infty)}$, and so $\dll(H) \geq 6$. Since $\dll(\Gamma_i) \leq 6$, we conclude that $\dll(H) = 6$.
\end{example} 

\subsection{$T$ is exceptional} 

By Remark \ref{remRankBound},
we can assume that $T$ is one among
$$ F_4(q) , \> ^2F_4(q) , \>
E_6(q) , \> ^2E_6(q) , \>
E_7(q) , \> \mbox{ or } E_8(q) . $$

Let $\widetilde G$ be the algebraic group corresponding to $T$, and let $\sigma$ be a Steinberg endomorphism such that $\widetilde G_\sigma' = T$. Note that $\widetilde G_\sigma$ is equal to $\operatorname{Inndiag}(T)$, the subgroup of $\Aut(T)$ generated by $T$ and the diagonal automorphisms of $T$.

We will use the following result from \cite{LS90}.

\begin{theorem}{\cite[Th.~2]{LS90}} \label{theorem:maxclassification}
    Let $G$ be an almost simple group with socle $T$ an exceptional group of Lie type, and let $H<G$ be a non-parabolic maximal subgroup.
    Then one of the following holds:
    \begin{itemize}
        \item[(i)] $H = N_G(D_\sigma)$, where $D$ is a reductive subgroup of $\widetilde G$ of maximal rank, the possibilities are given in \cite[Tab.~5.1, 5.2]{LSS90};
        \item[(ii)] $H$ is the centralizer of a graph, field, or graph-field automorphism of prime order;
        \item[(iii)] The generalized Fitting subgroup $F^*(H)$ is as in \cite[Tab.~III]{LS90};
        \item[(iv)] $H = N_G(E)$ where $E$ is an elementary abelian group given in \cite[Th.~1(II)]{CLSS92};
        \item[(v)] $T \cap H$ is one of $(2^2 \times D_4(q)) . \Sym(3)$, $A_1(q) \times \Sym(5)$, or $(\Alt(5) \times \Alt(6)).2^2$;
        \item[(vi)] $H$ is almost simple.
    \end{itemize}
\end{theorem}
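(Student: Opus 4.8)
The plan is to follow the strategy of Liebeck and Seitz: embed $G$ in the automorphism group of the ambient exceptional simple algebraic group and reduce the classification of non-parabolic maximal subgroups to the classification of maximal closed connected subgroups of the exceptional algebraic groups over $\overline{\mathbb{F}}_p$, handling separately the ``small'' subgroups for which this reduction breaks down. Concretely, write $T = O^{p'}(\overline G{}^\sigma)$ where $\overline G$ is simple of exceptional type over $\overline{\mathbb{F}}_p$ and $\sigma$ is a Steinberg endomorphism, and regard $G$ as lying between $\overline G{}^\sigma$ and its extension by field and graph automorphisms. Assuming $T \not\leq H$ (the contrary case being immediate, since then $H$ is itself almost simple), the generalized Fitting subgroup $F^*(H)$ is characteristic in $H$ and not normal in $G$, so $H \leqslant N_G(F^*(H)) \lneq G$ and maximality gives $H = N_G(F^*(H))$. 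The whole task therefore reduces to determining the possibilities for $F^*(H)$.

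First I would dispose of the local cases. If $F^*(H)$ is a $p$-group, then $O_p(H) \neq 1$ and the Borel--Tits theorem forces $H$ into a proper parabolic, contrary to hypothesis. If $F^*(H)$ is an $r$-group for a prime $r \neq p$, then $H$ is a maximal non-parabolic $r$-local subgroup, and the classification of such subgroups in \cite{CLSS92} places $H$ in case (iv) (or, when the normal $r$-subgroup is toral, in the maximal-rank case (i)). Otherwise $F^*(H)$ has a quasisimple component $Q$. A short further reduction — bounding the order of a second component, or of the noncentral part of $F^*(H)/Q$, via its faithful action on $\mathrm{Lie}(\overline G)$, which has dimension at most $248$ — shows that either $F^*(H)$ has a noncentral toral normal subgroup, forcing $H$ into the maximal-rank case (i), or $F^*(H)$ is one of finitely many bounded products, yielding the subgroups of case (v) (such as $(\Alt(5) \times \Alt(6)).2^2$) or non-maximal subgroups, or $F^*(H) = Q$ is itself quasisimple. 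We may thus assume $F^*(H) = Q$ quasisimple, with $H = N_G(Q)$, and split on the isomorphism type of $Q$.

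If $Q$ is of Lie type in the defining characteristic $p$ (the generic regime), then, unless $Q$ is one of a short list of small groups, $Q$ normalizes — hence lies in — a proper $\sigma$-stable closed connected subgroup $\overline M \lneq \overline G$, which may be taken maximal among these, and maximality of $H$ then gives $H = N_G(\overline M{}^\sigma)$; the case in which such an $\overline M$ is forced to equal $\overline G$ is exactly that of subfield subgroups, while the centralizers of graph automorphisms of prime order arise with $\overline M = C_{\overline G}(\tau)^{\circ}$, both landing in case (ii). Now one invokes the classification of maximal closed connected subgroups of $\overline G$: such a subgroup is parabolic (excluded), reductive of maximal rank (Borel--de Siebenthal; case (i), with the explicit list in \cite{LSS90}), or a simple subgroup of non-maximal rank drawn from an explicit finite list (Seitz's extension of Dynkin's theorem to positive characteristic, see \cite{LS90}); matching these against (i), (iii) and (vi), and recording the finitely many small $Q$ excluded above, produces Table III. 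If instead $Q$ is alternating, sporadic, or of Lie type in characteristic $\neq p$, then by the Landazuri--Seitz--Zalesskii bounds and their refinements every nontrivial projective $\overline{\mathbb{F}}_p Q$-module has dimension bounded below by an unbounded function of $|Q|$; since $Q$ embeds in $\overline G \leqslant \GL(\mathrm{Lie}(\overline G))$, only finitely many $Q$ occur, and a case-by-case analysis — aided by explicit computation where needed — places each in Table III (case (iii)) or in case (v) or (vi).

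I expect the main obstacle to be the generic Lie-type case: proving that a sufficiently large quasisimple subgroup in characteristic $p$ is contained in a proper closed connected subgroup, and then classifying the maximal closed connected subgroups of the exceptional algebraic groups in every characteristic. The first requires the ``generic'' machinery governing how subgroups generated by root elements act on the adjoint and minimal modules; the second is the substantial representation-theoretic classification of Seitz and of Liebeck--Seitz. Together these two ingredients form the bulk of the argument, the finite verifications arising in the local, product-of-components, and cross-characteristic cases being comparatively routine.
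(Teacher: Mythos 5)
This statement is not proven in the paper at all: it is quoted verbatim (with minor updating, e.g.\ the reference to \cite{CLSS92} in case (iv)) from Liebeck and Seitz \cite[Th.~2]{LS90}, and the paper's ``proof'' consists of that citation. So there is no internal argument to compare yours against; what you have written is an attempted reconstruction of the external Liebeck--Seitz proof.

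As such a reconstruction, your outline captures the broad architecture correctly: reduce to $H = N_G(F^*(H))$ when $T \not\leqslant H$, kill $O_p(H) \neq 1$ by Borel--Tits since $H$ is non-parabolic, send the remaining local cases to the classification of \cite{CLSS92} and the torus normalizers of \cite{LSS90}, handle several components or a non-central toral piece by the maximal-rank and exotic cases, and use dimension bounds on $\mathrm{Lie}(\tilde G)$ (Landazuri--Seitz--Zalesskii type) to confine cross-characteristic and alternating/sporadic components to a finite list feeding \cite[Tab.~III]{LS90}. But two caveats prevent this from standing as a proof. First, every decisive ingredient --- the classification of maximal closed positive-dimensional subgroups of the exceptional algebraic groups, the local analysis, the cross-characteristic case checks --- is invoked rather than established, so the proposal is a roadmap, not an argument; for the purposes of this paper the honest proof is precisely the citation. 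Second, your treatment of the generic defining-characteristic case is anachronistic: the claim that a large quasisimple $Q$ of Lie type in characteristic $p$ must lie in a proper $\sigma$-stable closed connected subgroup $\overline M < \tilde G$ is \emph{not} part of \cite{LS90} and is not needed for the statement --- once $F^*(H)$ is simple, $H$ is simply placed in the catch-all case (vi) ``$H$ almost simple,'' with no classification attempted there; the ``finite inside algebraic'' reduction you describe belongs to the later Liebeck--Seitz programme and to work completed only much more recently. If you rely on that step as stated, your reconstruction proves something stronger than the quoted theorem and rests on results unavailable to (and unproven in) the cited source.
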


We start analyzing the various possibilities.
If $H$ is of type (i), and $D$ is not a torus, the possibilities for $N_{\widetilde G_\sigma}(D_\sigma)$ are given in \cite[Tab.~5.1]{LSS90}. By Proposition~\ref{propGDL} we have $\dll(H) \leq \dll(H \cap \widetilde G _\sigma) + \dll(H/ H \cap \widetilde G_\sigma)$.
Now $H \cap \widetilde G _\sigma = N_{\widetilde G_\sigma}(D_\sigma) \cap G$, so $N_{\widetilde G_\sigma}(D_\sigma)^{(\infty)} \subseteq H \cap \widetilde G_\sigma$ and $\dll(H \cap \widetilde G_\sigma) \leq \dll(N_{\widetilde G_\sigma}(D_\sigma))$ by Lemma~\ref{lemSub}.
Moreover, $H/ H \cap \widetilde G_\sigma$ is isomorphic to a subgroup of the solvable group $G \widetilde G_\sigma/ \widetilde G_\sigma$.
Therefore
\[
    \dll(H) \> \leq \>
    \dll(N_{\widetilde G_\sigma}(D_\sigma)) + \dl(G/ G \cap \widetilde G_\sigma).
\]
Using this inequality, we can prove that $\dll(H) \leq 9$ for all groups in \cite[Tab.~5.1]{LSS90}.
We remark on one of the more complex cases: suppose $T=E_8(q)$ and $N_{\widetilde G_\sigma}(D_\sigma)= d^4.(\L_2(q))^8.d^4.\AGL_3(2)$. If $q$ is $2$ or $3$, we can directly construct this group in Magma \cite{magma}, and hence show that $\dll(N_{\widetilde G_\sigma}(D_\sigma)) \leq 1$. When $q \geq 4$ instead, we can show that $d^4:\AGL_3(2)$ is perfect and hence $N_{\widetilde G_\sigma}(D_\sigma)$ is also perfect. The other cases are easy, with the largest bounds on $\dll(H)$ usually occurring when $q$ is $2$ or $3$.

If $H$ is of type (i) and $D$ is a maximal torus, we similarly consider every case in \cite[Tab.~5.2]{LSS90}. In each case we can show that $\dll(N_{\widetilde G_\sigma}(D_\sigma)/D_\sigma) \leq 4$, so by Corollary \ref{corExt} we have that $\dll(H) \leq \dll(D_\sigma) + 4 + \dll(G/(\widetilde G_\sigma \cap G)) \leq 7$.

If $H$ is of type (ii), then there exists $q_0 = q^{1/r}$, with $S$ equal to one of $F_4(q_0)$, ${}^2F_4(q_0)$, $E_6(q_0)$, ${}^2E_6(q_0)$, $E_7(q_0)$ or $E_8(q_0)$, such that $S \leqslant T \cap H \leqslant \Aut(S)$. Since $S$ is always simple, we have $\dll(T \cap H) \leq 2$, and thus $\dll(H) \leq 4$. 

Next, if $H$ is of type (iii), then $F^*(H)$ is the direct product of two or three non-abelian simple groups which are given in \cite[Tab.~III]{LS90}.
Moreover, $H$ is a subgroup of $\Aut(F^*(H))$ containing $F^*(H)$,
and it is easy to see that $\dll(H) \leq 5$.
For instance, suppose that $T = E_8(q)$, and $F^*(H) = \L_2(q) \times G_2(q)^2$ where $q > 3$. Then $H$ is contained in $\Aut(F^*(H)) = \Aut(\L_2(q)) \times (\Aut(G_2(q)) \wr C_2)$, and contains $\Aut(F^*(H))^{(\infty)} = F^*(H)$. As such, by Lemma~\ref{lemSub},
\[
    \dll(H) \leq \dll(\Aut(F^*(H))) \leq \max \left( \dl(\Out(\L_2(q))), \dl(\Out(G_2(q)) \wr C_2) \right) .
\]
Now $\dl(\Out(\L_2(q))) \leq 2$ by Lemma~\ref{lemOut}, and 
\[
    \dl(\Out(G_2(q)) \wr C_2) \leq \dl(\Out(G_2(q)) + 1 \leq 3.
\]
Hence, $\dll(H) \leq 5$.

If $H$ is of type (iv) then $H \cap \widetilde G_\sigma = N_{\widetilde G_\sigma}(E)$ for some elementary abelian group $E$, and $N_{\widetilde G_\sigma}(E)$ is described in \cite[Tab.~1]{CLSS92}. As such, we can find that
\begin{align*}
    \dll(H) & \leq
    \dll(C_{\widetilde G _\sigma}(E)) + \dll(N_{\widetilde G_\sigma}(E)/C_{\widetilde G_\sigma}(E)) + \dl(G/(G \cap \widetilde G_\sigma)) \\
    & \leq 2 + 2 + 2 = 6.
\end{align*}

If $H$ is of type (v), then $\dll(T \cap H) \leq 4$ and so $\dll(H) \leq 6$.
Finally, if $H$ is of type (vi) then $\dll(H) \leq 3$ by Lemma~\ref{lemOut}.

\subsection{$H$ is a parabolic subgroup} \label{sec3.4}

Let $G$ be a group of Lie type with socle $T$ and let $H<G$ be a parabolic maximal subgroup.
As in the previous section, let $\widetilde G$ be the algebraic simple group corresponding to $T$. Let $\Phi$ be a root system of $\widetilde G$, and $\Pi$ its simple system. For any subset $J \subset \Pi$, let $\Phi_J$ be the root subsystem of $\Phi$ spanned by $J$, and let $\Phi_J^+, \Phi^+$ denote their respective positive systems. For a root $\alpha \in \Phi^+ \setminus \Phi_J^+$, we can write $\alpha = \sum_{\beta \in J} b_\beta \beta + \sum_{\beta \in \Pi \setminus J} c_\beta \beta$, and we say
\begin{align*}
    \operatorname{height}(\alpha) & = \sum b_\beta + \sum c_\beta , \\
    \operatorname{level}(\alpha) & = \sum c_\beta .
\end{align*} 
For instance, if $G$ is of type $E_6$, then the root of $\Phi$ of largest height is
\[
    \dynkin[edge length=.5cm, labels={1, 2, 2, 3, 2, 1}, label directions={below,right,below,below,below,below}] E{6},
\]
which has height $11$. If $J$ is, for example, $\Pi$ with the middle node removed, then the level of this root is $3$, and this is the largest possible level of a root in this case.

For any root $\alpha \in \Phi$, let $X_\alpha$ be the root subgroup of $T$ corresponding to $\alpha$.
Since $H$ is a parabolic subgroup of $G$, there exists a subset $J$ of $\Pi$ such that $T \cap H$ is conjugate to $P_J$, where $P_J = U \rtimes L$ with 
\[
    U \> = \> \prod_{\alpha \in \Phi^+ \setminus \Phi_J^+} X_\alpha
\]
a $p$-group, and
\[
    L \> = \> \langle \prod_{\alpha \in J} X_\alpha, K \rangle 
\]
where $K$ is a Cartan subgroup of $T$.

\begin{remark}\label{remark:sizeofJ}
    If $J_1 \subset J_2 \subset \Pi$, then $P_{J_1} \subset P_{J_2}$. Thus, since $H$ is a maximal parabolic subgroup, $J$ must be a maximal subset of $\Pi$ such that $J^\gamma = J$ for any graph automorphism $\gamma \in G$. Hence, if $G$ is untwisted and contains no graph automorphisms, $|\Pi \setminus J| = 1$. If $G$ is twisted or contains a graph automorphism, then $\Pi \setminus J$ is equal to the orbit of a point under a graph automorphism, and so $|\Pi \setminus J|$ is either 2 or 3.
\end{remark}

By Proposition~\ref{propGDL} we have
$$
 \dll(T \cap H) \> \leq \> \dl(U) + \dll(L) ,
$$
and we will bound $\dl(U)$ and $\dll(L)$ separately.

\begin{lemma} \label{lemUnip}
We always have $\dl(U) \leq 3$.
Moreover, $\dl(U) \leq 2$ if $G$ is classical.
\end{lemma}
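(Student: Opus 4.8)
The statement to prove is Lemma~\ref{lemUnip}: for a parabolic subgroup $H$ of a group $G$ of Lie type, writing $T\cap H$ conjugate to $P_J=U\rtimes L$ with $U=\prod_{\alpha\in\Phi^+\setminus\Phi_J^+}X_\alpha$ the unipotent radical, we have $\dl(U)\le 3$ always, and $\dl(U)\le 2$ when $G$ is classical. The natural strategy is to exploit the \emph{level} filtration of $U$ introduced just above the lemma. For $i\ge 1$, set $U_i=\prod_{\operatorname{level}(\alpha)\ge i}X_\alpha$. The Chevalley commutator relations show each $U_i$ is normal in $U=U_1$, and $[U_i,U_j]\subseteq U_{i+j}$; in particular the successive quotients $U_i/U_{i+1}$ are abelian (they are spanned by root subgroups of a fixed level, which commute modulo higher level since the sum of two such roots has level $\ge 2i>i$). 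So $U$ is nilpotent with abelian factors indexed by the levels, and hence $\dl(U)\le\lceil\log_2(\ell+1)\rceil$ where $\ell$ is the maximal level occurring. This reduces everything to bounding $\ell$.

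**Bounding the maximal level.** By Remark~\ref{remark:sizeofJ}, $\Pi\setminus J$ is a single node (untwisted, no graph automorphism) or a graph-automorphism orbit of size $2$ or $3$; write $k=|\Pi\setminus J|$. The maximal level of a root in $\Phi^+$ is $\sum_{\beta\in\Pi\setminus J}c_\beta^{\max}$, where $c_\beta^{\max}$ is the coefficient of $\beta$ in the highest root $\tilde\alpha$ of $\Phi$ (or, more precisely, the maximum over the relevant highest roots of the irreducible components — but since $\tilde\alpha$ has the largest coordinate sum, its coefficients dominate). The plan is to read these coefficients off the extended Dynkin diagrams. For the classical types, the highest root coefficients are: all $1$'s for $A_n$; $(1,2,2,\dots,2,1)$ for $B_n$, $(2,\dots,2,1)$ for $C_n$, $(1,1,2,\dots,2,1,1)$ for $D_n$. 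Thus a single removed node contributes at most $2$, giving $\ell\le 2$ and $\dl(U)\le\lceil\log_2 3\rceil=2$; one must also check the twisted classical cases ($\,^2A_n$, $\,^2D_n$), where $\Pi\setminus J$ is an orbit of size $2$, but the folded coefficients still give $\ell\le 4$ — wait, this needs care: for $\,^2A_n$ an orbit of two end nodes each with coefficient $1$ gives $\ell\le 2$; for $\,^2D_n$ the swapped pair of end nodes again has coefficients $1,1$ so $\ell\le 2$; the only way to exceed is if $\Pi\setminus J$ contains nodes of coefficient $2$, which does not happen for these orbits. Hence $\dl(U)\le 2$ in all classical cases. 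For the exceptional types the largest highest-root coefficients are $3$ ($F_4$, $E_6$), $4$ ($E_7$), $6$ ($E_8$), $3,3$ ($G_2$); summing over an orbit of size up to $3$, the maximal level is largest for $E_8$ (single node of coefficient $6$, so $\ell=6$) and for the twisted/graph cases one checks the orbit sums stay $\le 7$ — actually for $\,^2E_6$ the folded node pair has coefficients summing to at most... let me just say: in every exceptional case $\ell\le 7$, whence $\dl(U)\le\lceil\log_2 8\rceil=3$.

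**The main obstacle and how to handle it.** The honest difficulty is not the logarithmic estimate but verifying the level bound \emph{uniformly} across all twisted groups and all admissible $J$, since there $\Pi\setminus J$ ranges over graph-orbits and the ``level'' is computed in the twisted root system (or equivalently via the $\sigma$-orbit decomposition of the untwisted one). I would handle this by listing, for each Lie type, the extended Dynkin diagram with its marks, and for each possible $\Pi\setminus J$ (single node in the untwisted case; graph-automorphism orbit otherwise) summing the marks of the nodes in $\Pi\setminus J$ to get $\ell$; a short table suffices. One subtlety worth flagging: when $J$ is not connected, $\Phi_J$ decomposes, but this only \emph{removes} roots from $\Phi^+\setminus\Phi_J^+$, so it can only decrease the maximal level — no new difficulty. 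A second subtlety: I should confirm that $U_i/U_{i+1}$ is genuinely abelian rather than merely that $[U_i,U_i]\subseteq U_{2i}$; since $2i>i$ when $i\ge 1$, this is immediate, and the claim $\dl(U)\le\lceil\log_2(\ell+1)\rceil$ follows from the standard fact that a group with a normal series of length $\ell$ with abelian factors has derived length at most $\lceil\log_2(\ell+1)\rceil$. With the level bound $\ell\le 2$ (classical) and $\ell\le 7$ (exceptional) in hand, the lemma follows. The only computation that genuinely must be carried out carefully is the exceptional-type table, and $E_8$ with $\Pi\setminus J$ a single node is the extremal case, forcing the constant $3$.
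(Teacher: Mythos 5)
Your proposal follows essentially the same argument as the paper: filter $U$ by the level, use the commutator relation $[U_i,U_j]\subseteq U_{i+j}$ from Azad--Barry--Seitz and Carter, and read the maximal level off the highest-root coefficients together with Remark~\ref{remark:sizeofJ}. Two small points worth flagging. First, your closing invocation of a ``standard fact that a group with a normal series of length $\ell$ with abelian factors has derived length at most $\lceil\log_2(\ell+1)\rceil$'' is not a true statement in that generality (an abelian normal series of length $\ell$ only bounds derived length by $\ell$); what you actually need, and what you correctly proved two sentences earlier, is that the filtration satisfies $[U_i,U_j]\subseteq U_{i+j}$, so that $U^{(k)}\subseteq U_{2^k}$. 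Second, the claim $\ell\le 2$ in all classical cases overlooks $D_4$ when $G$ contains a triality automorphism (orbit $\{\alpha_1,\alpha_3,\alpha_4\}$, each of coefficient $1$, so $\ell=3$) and $C_2$ when $G$ contains the graph automorphism of $\Sp_4(q)$, $q$ even (forcing $J=\emptyset$ and $\ell=3$); in both cases the conclusion $\dl(U)\le 2$ still holds since $\lfloor\log_2 3\rfloor+1=2$, so the lemma is unaffected, but the intermediate bound you state is not quite right.
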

\begin{proof}
Let $U(i)$ denote the subgroup of $U$ given by $\prod X_{\alpha}$ where the product is over all $\alpha \in \Phi^+ \setminus \Phi_J^+$ with level at least $i$. By \cite{ABS90} and \cite[Th.~5.2.2]{Car72}, we have $[U(i), U(j)] \subseteq U(i + j)$, and hence
\[
    \dl(U) \leq \lfloor\log_2(\max_{\alpha \in \Phi^+ \setminus \Phi_J^+} \operatorname{level}(\alpha)) \rfloor + 1 .
\]
Additionally, if $\sum_{\beta \in \Pi} d_\beta \beta$ is the root of $\Phi$ of largest height, then $\max_{\alpha \in \Phi^+ \setminus \Phi_J^+} \operatorname{level}(\alpha)$ is equal to $\sum_{\beta \in \Pi \setminus J} d_\beta$.
Hence the result can be proven by computing the root with largest height in $\Phi$, and using Remark~\ref{remark:sizeofJ} to determine the possibilities for $J$. For instance, if $G$ is of type $E_6$, then $\max_{\alpha \in \Phi^+ \setminus \Phi_J^+} \operatorname{level}(\alpha)) \leq 4$, with equality occurring if and only if $J$ is $\Pi$ with the second and fifth nodes removed. Hence, $\dl(U) \leq 2$ in this case.
\end{proof} 

By \cite[Th.~2.6.5(f)]{GLS94}, the subgroup $L_0 = \prod_{\alpha \in J} X_\alpha$ is a product of Chevalley groups and is normal in $L$. That is, $L_0 = G_1 \circ \cdots \circ G_m$ for some groups $G_i$ which are either quasisimple, or one of the following:
\[
    \SL_2(2), \> \SL_2(3), \> \SU_3(2), \> 
    B_2(2), \text{ or } {}^2B_2(2).
\]
In fact, each $G_i$ corresponds to a connected component of the Dynkin diagram of $\Phi_J$.

\begin{lemma} \label{lemLevi}
We always have $\dll(L) \leq 5$.
If $T$ is not ${}^2A_n(2)$ with $n$ even, then $\dll(L) \leq 4$.
If $q \geq 4$, then $\dll(L) \leq 1$.
\end{lemma}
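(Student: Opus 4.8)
The plan is to bound $\dll(L)$ using the semidirect-type decomposition $L_0 \unlhd L$ with $L/L_0$ abelian (since $L_0$ contains $\prod_{\alpha \in J} X_\alpha$ and $L = \langle L_0, K\rangle$ with $K$ a Cartan subgroup, so $L/L_0$ is a quotient of a torus), combined with the structure $L_0 = G_1 \circ \cdots \circ G_m$ recorded just above the statement. By Proposition \ref{propGDL} it suffices to bound $\dll(L_0)$ and $\dl(L/L_0)$. The central product $L_0$ is generated by the $G_i$ together with a central torus, so $L_0 = Z \cdot (G_1 \cdots G_m)$ with $Z$ abelian; applying Proposition \ref{propGDL} once more, $\dll(L_0) \leq \dll(G_1 \cdots G_m) + \dl(Z) \leq \max_i \dll(G_i) + 1$ after absorbing the abelian part, but one must be slightly careful: I would instead observe that $G_1 \cdots G_m$ is a quotient of the direct product $G_1 \times \cdots \times G_m$, so $\dll(G_1 \cdots G_m) \leq \max_i \dll(G_i)$, and then handle the extra abelian factors from $K$ in the passage to $L$.

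The heart of the matter is therefore to bound $\dll(G_i)$ for each possible $G_i$, and $\dl(L/L_0)$. If $G_i$ is quasisimple, then $G_i/Z(G_i)$ is simple and $Z(G_i)$ is abelian, so $\dll(G_i) \leq 1$; in fact a quasisimple group is perfect, so $\dll(G_i)=0$. The only non-quasisimple possibilities are the small exceptional groups $\SL_2(2), \SL_2(3), \SU_3(2), B_2(2), {}^2B_2(2)$, which are all solvable: $\SL_2(2) \cong \Sym(3)$ has $\dl = 2$, $\SL_2(3)$ has $\dl = 3$ (it is $Q_8 \rtimes C_3$), $B_2(2) \cong \Sym(6)$ is not solvable—so this case needs care: $\Sp_4(2) \cong \Sym(6)$ has $\dll = 3$ via $\Sym(6) \rhd \Alt(6) \rhd 1$—wait, $\dll(\Sym(6)) = \dl(\Sym(6)/\Alt(6)) = 1$; and ${}^2B_2(2)$ is the Frobenius group of order $20$ with $\dl = 2$, while $\SU_3(2)$ is solvable of derived length $\dl(\SU_3(2)) = 4$ (it has shape $3^{1+2}{:}8$, giving derived length $\le 3$, but with the full extension one checks $\le 4$). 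The governing case is $\SU_3(2)$, which forces the bound $\dll(L_0) \le 4$ in general. Since $G_i = \SU_3(2)$ can only occur when the relevant component of $\Phi_J$ is of type ${}^2A_2$, i.e. when $T$ involves a ${}^2A_n(2)$ structure with the fixed node pattern forcing an even $n$, this explains the dichotomy in the statement: for $T$ not ${}^2A_n(2)$ with $n$ even, the worst component is $\SL_2(3)$ with $\dl = 3$ (occurring for $q = 3$), but combined with $\dl(L/L_0) \le 1$ this would give $4$; I would tighten by noting that when $q=3$ the torus contribution overlaps, or simply accept $\dll(L) \le 4$.

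For the final clause, when $q \geq 4$ every $G_i$ on the exceptional list is excluded (those groups only arise over $\mathbb{F}_2$ or $\mathbb{F}_3$), so every $G_i$ is quasisimple, hence perfect, hence $L_0$ is perfect. It then remains to show $L = L_0$, i.e. that the Cartan subgroup $K$ is already contained in $L_0$; this holds because for $q \geq 4$ each $\SL_2(q)$ (or more generally each rank-one component) contains a non-central torus whose elements, together with the simply-connected structure, generate the full maximal torus of $L$—more precisely, $K$ is generated by the images of the coroots $h_\alpha(t)$ for $\alpha \in \Pi$, those with $\alpha \in J$ lie in $L_0$, and the finitely many with $\alpha \in \Pi\setminus J$ contribute a $p'$-part that is absorbed because $\gcd$-conditions on $q-1$ make the relevant quotient trivial. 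I expect this last absorption step—showing $L/L_0$ vanishes for $q \geq 4$—to be the main obstacle; the cleanest route is to cite the explicit description of $L$ in \cite[Th. 2.6.5]{GLS94} or to argue via \cite[Sec. 3.1]{KL90} that for $q \ge 4$ the Levi factor of a parabolic is generated by its derived subgroup and a torus lying in it, so $\dll(L) = \dll(L_0) = 0$.
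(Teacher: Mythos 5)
Your proposal follows the same route as the paper: bound $\dll(L_0)$ by examining the possible components $G_i$ (with $\SU_3(2)$, which only arises in type ${}^2A_n(2)$ with $n$ even, as the extremal case at $\dll = 4$), then add one for the abelian quotient $L/L_0 \cong K/(K \cap L_0)$ via Proposition~\ref{propGDL}. The only real issue is in your treatment of the clause $q \geq 4$. There you correctly observe that every $G_i$ is quasisimple, hence $L_0$ is perfect, but then declare that ``it remains to show $L = L_0$'' and go hunting for reasons why the Cartan subgroup $K$ should be absorbed into $L_0$. This is both unnecessary and, in general, false --- the full maximal torus of $L$ need not lie inside $\langle X_\alpha : \alpha \in J \rangle$. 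Since you have already established that $L/L_0$ is a quotient of the abelian group $K$, the inequality $\dll(L) \leq \dll(L_0) + \dl(L/L_0) \leq 0 + 1 = 1$ is immediate, and that is all the lemma claims. You are trying to improve $\leq 1$ to $=0$, which is not needed and is the source of the ``obstacle'' you worry about. A minor point: $\SU_3(2)$ has the shape $3^{1+2}.Q_8$ rather than $3^{1+2}{:}8$; your parenthetical reasoning there is muddled, but the conclusion $\dl(\SU_3(2)) = 4$ that drives the bound $\dll(L) \leq 5$ is correct.
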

\begin{proof}
If $q \geq 4$ then $L_0$ is a central product of quasisimple groups, and hence is perfect. Thus, since $L = L_0 . (K/K \cap L_0)$, and $K$ is abelian, we obtain $\dll(L) \leq 1$.
In general, if $L_0 = G_1 \circ \cdots \circ G_m$, then $\dll(G_i) \leq 4$, with equality if and only if $G_i = \SU_3(2)$. Hence, since $G_i$ can be equal to $\SU_3(2)$ only if $G$ is of type ${}^2A_n(q)$ with $n$ even, we have obtained the desired result.
\end{proof} 

\begin{remark}
Both $\dl(U)=3$ and $\dll(L)=5$ can be attained.
If $G$ is of type $E_8$, and $J$ is equal to
\[
    \dynkin [parabolic=8] E8 ,
\]
then the largest level of a root is 6. Additionally, we can use Magma \cite{magma} and \cite[Th.~5.2.2]{Car72} to determine that $[U(1), U(1)] = U(2)$ and $[U(2), U(2)] = U(4) \neq 1$.
Thus, $\dl(U) = 3$ in this case.
For $\dll(L)$, observe that $\PSU_7(2) (= {}^2A_6(2))$ contains a parabolic maximal subgroup with Levi factor $L=(\SL_2(4) \circ \SU_3(2)):3$; in particular, $\dll(L) = 5$.
\end{remark}

By combining Lemmas~\ref{lemUnip} and \ref{lemLevi}, we always obtain $\dll(T \cap H) \leq 7$. By Lemma~\ref{lemOut}, this gives $\dll(H) \leq 9$ if $T$ is not $\POmega_8^+(q)$ for $q$ odd.
If $T=\POmega_8^+(q)$, then we actually obtain $\dll(T \cap H) \leq 6$ and therefore $\dll(H) \leq 9$.
This concludes the proof of Theorem~\ref{thMain} for groups of Lie type.

\subsection{$T$ is sporadic}

Let $G$ be an almost simple group with sporadic socle $T$, and let $H$ be a maximal subgroup of $G$.
To read Table~\ref{table:Spor}, let
$$ \dllMax(G) \> := \> \max \{ \dll(H) : H \mbox{ is a maximal subgroup of } G \} , $$
so that Theorem~\ref{thMain} states that $\dllMax(G) \leq 10$ if $G$ is almost simple.

{\small
\begin{table}
	\centering
	\begin{tabular}{| c | c || c | c || c | c |}
		\hline
		
		\phantom{$\dfrac{1^1}{1_{1_1}}$}\hspace{-6mm}
		$T$ & $\dllMax(G)$ & $T$ & $\dllMax(G)$ & $T$ & $\dllMax(G)$ \\
		
		\hline \hline
		
		\phantom{$\dfrac{1^1}{1_{1_1}}$}\hspace{-6mm}
		$\mathrm{M_{11}}$ & $4$ & $\mathrm{McL}$ & $4$ & $\mathrm{Ly}$ & $2$ \\
		
		\phantom{$\dfrac{1^1}{1_{1_1}}$}\hspace{-6mm}
		$\mathrm{M_{12}}$ & $5$ & $\mathrm{He}$ & $4$ & $\mathrm{Th}$ & $7$ \\
		
		\phantom{$\dfrac{1^1}{1_{1_1}}$}\hspace{-6mm}
		$\mathrm{J_1}$ & $3$ & $\mathrm{Ru}$ & $4$ & $\Fi_{23}$ & $10$ \\
		
		\phantom{$\dfrac{1^1}{1_{1_1}}$}\hspace{-6mm}
		$\mathrm{M_{22}}$ & $1$ & $\mathrm{Suz}$ & $6$ & $\mathrm{Co_1}$ & $6$ \\
		
		\phantom{$\dfrac{1^1}{1_{1_1}}$}\hspace{-6mm}
		$\mathrm{J_2}$ & $4$ & $\mathrm{O'N}$ & $5$ & $\mathrm{J_4}$ & $6$ \\
		
		\phantom{$\dfrac{1^1}{1_{1_1}}$}\hspace{-6mm}
		$\mathrm{M_{23}}$ & $2$ & $\mathrm{Co_3}$ & $5$ & $\Fi_{24}'$ & $4$ \\
		
		\phantom{$\dfrac{1^1}{1_{1_1}}$}\hspace{-6mm}
		$\mathrm{HS}$ & $2$ & $\mathrm{Co_2}$ & $6$ & $\mathbb{B}$ & $6$ \\
		
		\phantom{$\dfrac{1^1}{1_{1_1}}$}\hspace{-6mm}
		$\mathrm{J_3}$ & $4$ & $\Fi_{22}$ & $6$ & $\mathbb{M}$ & between $4$ and $6$ \\
		
		\phantom{$\dfrac{1^1}{1_{1_1}}$}\hspace{-6mm}
		$\mathrm{M_{24}}$ & $2$ & $\mathrm{HN}$ & $6$ & $\mathrm{^2F_4(2)'}$ & $5$ \\
		
		\hline
		
	\end{tabular}
	\medskip{\caption{$\dllMax(G)$ for almost simple groups with sporadic socle.}
		\label{table:Spor}}
\end{table}
}

Let us describe how we obtain these values.

We use the $\Atlas$ \texttt{\MYhref[mypink]{https://brauer.maths.qmul.ac.uk/Atlas}{brauer.maths.qmul.ac.uk/Atlas}}
(see also the library \texttt{CTblLib (v1.3.9)} in \cite{GAP4}),
and we combine this data with Corollary~\ref{corExt} and Remark~\ref{remP}.
For instance, if $G = \mathbb{B}$ and $H$ is $[2^{30}].\L_5(2)$, then $\dl([2^{30}]) \leq 5$ and hence $\dll(H) \leq 5$.
More precisely, if $T$ is not one of $\Fi_{24}'$, $\mathbb{B}$ or $\mathbb{M}$, then the maximal subgroups of $G$ can be obtained by \cite{GAP4}, and thus $\dllMax(G)$ can be computed.
The equality in Theorem~\ref{thMain} is attained when $H = \texttt{AtlasSubgroup(``Fi23'',7)}$,
which is a solvable subgroup with shape $3^{1+8}.2^{1+6}.3^{1+2}.2S_4$.

If $T=\Fi_{24}'$, the maximal subgroups of $G=\Aut(T)$ can be obtained from the $\Atlas$. Thus, we can compute $\dll(H) \leq 4$ in this case (and $4$ is attained).
If a maximal subgroup of $T$ is equal to the intersection of a maximal subgroup of $G$ with $T$, then $\dll(H) \leq 4$ by Lemma~\ref{lemSub}.
Using the classification of the maximal subgroups of $\Fi_{24}'$ given in the $\Atlas$,
it is easy to bound $\dll(H) \leq 4$ in the remaining cases.

If $G=\mathbb{M}$, arguing as above we see that $\dll(H) \leq 6$ unless $H$ is $H_1 = 3^2.3^5.3^{10}.(\mathrm{M_{11}} \times \GL_2(3))$. In fact $H_1$ is available in Magma and we can compute $\dll(H_1) =4$.
Finding the exact value of $\dllMax(\mathbb{M})$ seems to be cumbersome with our current knowledge.

Finally, if $G=\mathbb{B}$, then $\dll(H) \leq 6$ unless $H$ is $H_2 = 3^2.3^3.3^6.(\Sym(4) \times \GL_2(3))$. From $H_1$ in $\mathbb{M}$, we obtain $H_2$ by centralizing and then factoring out an involution in the $\mathrm{M_{11}}$
(see \cite[Tab.~1]{AW04}).
This allows us to compute $\dl(H_2)=6$.

\vspace{0.1cm}
\section{Perfect groups} \label{sec4}

We now deal with primitive perfect groups.

\begin{proof}[Proof of Theorem~\ref{thPrimPerf}]
Let $n \geq 5$ and let $G < \Sym(n)$ be a primitive perfect group,
with $H<G$ being the stabilizer of a point.
Our goal is to prove that $\dll(H) \leq 10$.

Of course, we can apply Theorem~\ref{thMain} when $G$ is almost simple
(note that in this case $G$ is actually simple).
Now let $1 \neq N \unlhd G$, and observe that $NH=G$.
By Proposition~\ref{propGDL}, we have
\begin{align*}
\dll(H)  &  \> \leq \>
\dll(H/N \cap H) + \dll(N \cap H) \\ & \> = \>
\dll(G/N) + \dll(N \cap H) \> .
\end{align*}
Since $G$ and so $G/N$ is perfect, we obtain
\begin{equation} \label{eqPrimPerf}
\dll(H) \> \leq \> \dll(N \cap H) . 
\end{equation}

In particular, $H$ is perfect if $G$ has a regular normal subgroup (i.e. if $N \cap H =1$),
and Theorem~\ref{thPrimPerf} follows easily.
By the O'Nan-Scott theorem \cite[Sec.~4]{DM96}, we are left with the following cases:

\begin{itemize}

    \item[$\bullet$] \textbf{Product action.}
    There exists a primitive almost simple group $X$ with socle $T$ and stabilizer $Y$, and a transitive group $S \leqslant \Sym(d)$ such that
    $$ T^d \unlhd G \leqslant X \wr_d S . $$
    (Note that $G$ acts on $|X:Y|^d$ points).
    Let $T^d \cong N \unlhd G$.
    We have $H = Y^d S \cap G$,
    and so $N \cap H = N \cap Y^d \cong (T \cap Y)^d$.
    By (\ref{eqPrimPerf}) we have
    $$ \dll(H) \leq \dll(N \cap H) = \dll(T \cap Y) . $$
    As discussed we have $Y^{(3)} \subseteq T \cap Y$,
    so by Lemma~\ref{lemSub} we obtain
    $$ \dll(T \cap Y) \leq \dll(Y) \leq 10 $$
    as desired.
    
    \item[$\bullet$] \textbf{Diagonal type.}
    Let $N \cong T^k$ be the socle of $G$.
    If $G$ is of simple diagonal type, then $N \cap H$ is isomorphic to $T$ and embeds subdirectly in $T^k$.
    If $G$ is of compound diagonal type, then $k= \ell m$ ($\ell,m \geq 2$) and $N \cap H \cong T^\ell$ is a product of $\ell$ diagonals of $T^m$.
    In any case, $N \cap H$ is perfect and we can use (\ref{eqPrimPerf}) to conclude that $H$ is perfect.
    
\end{itemize}

The proof of $\dll(H) \leq 10$ is complete.

We finally show that $\dll(H) =10$ cannot happen in product action. By the discussion above, the only possibility would be with $X=T=\Fi_{23}$ and $Y<X$ the solvable maximal subgroup with $\dl(Y) = 10$. Then $G=X \wr_d S$ for some transitive perfect group $S < \Sym(d)$ and $H = Y \wr_d S$. However, by Lemma~\ref{lem:wreath'} we have that $H'$ is perfect and $\dll(H) =1$ in this case.
\end{proof} 

We would like to point out the following result for quasisimple groups,
i.e. perfect central extensions of non-abelian simple groups.
For completeness, we also discuss the minimum
number of generators, which we denote by $d(G)$.

\begin{proposition} \label{propQS}
Let $G$ be a quasisimple group and let $H$ be a maximal subgroup of
$G$. Then $\dll(H) \leq 10$, and $d(H) \leq 5$.
\end{proposition}
\begin{proof}
We first observe that $Z = Z(G) \subseteq H$.
In fact, otherwise we would have
$$ G = G' = (HZ)' = H' \neq G. $$
Let $G/Z \cong T$, and let us write $\mathrm{M}(T)$ for the Schur multiplier of $T$.
Now $H/Z$ is isomorphic to a maximal subgroup of $T$, so $\dll(H) \leq \dll(H/Z) + \dl(Z)$ by Proposition~\ref{propGDL},
and $\dll(H) \leq 10$ follows by Theorem~\ref{thMain}
and the fact that $\mathrm{M}(\Fi_{23})$ is trivial.

For the number of generators, clearly we have $d(H) \leq d(H/Z) + d(Z)$,
and $d(H) \leq 6$ follows by \cite{BLS13} and the fact that $\mathrm{M}(T)$ can be generated by at most two elements (see the $\Atlas$).
Looking at \cite[Tab.~1]{LMT20},
we see that the only possibilities for $d(H) = 6$ to happen are with
$T = \POmega^+_n(q)$ ($\mathrm{M}(T)$ should be non-cyclic).
However, it is easy to see that $d(H) = 4$ in these cases.
\end{proof}

We do not know whether the bound for $d(H)$ in Proposition~\ref{propQS} can be improved to $4$.

To conclude the paper, we make a comment on the constant in Conjecture~\ref{conjTwoPoints},
i.e. what can be achieved by stabilizing two points in an arbitrary primitive group.
For $G=\Fi_{23}$ and the solvable maximal subgroup $H$ with $\dl(H) =10$, we can find $g \in G$ such that $\dl(H \cap H^g) =2$.
However, if $G=\Aut(\POmega_8^+(3))$ and $H$ is the solvable maximal subgroup of index $36\,400$, then the corresponding primitive group has rank $5$ and the derived lengths of stabilizers of two points are $[8,6,6,5,5]$. In particular, $\dl(H \cap H^g) \geq 5$ for all $g \in G$.
We can do better with groups of product type:

\begin{example} \label{exBad2}
Let $X = \Aut(\POmega_8^+(3))$ and let $Y<X$ be the solvable maximal subgroup discussed above.
Let $S = (\Sym(4) \wr_4 \Sym(4)) \wr C_2$ act on $32$ points,
and let $G = X \wr_{32} S$.
Then $H = Y \wr_{32} S$ is a solvable maximal subgroup of $G$.
For $g = (v,s) \in G$, since $Y^d S$ is normalized by $s$, we have
$$ H \cap H^g = Y^dS \cap (Y^dS)^{vs} = (Y^d S \cap (Y^d S)^v)^s , $$
and it follows that $\dl(H \cap H^g) = \dl(H \cap H^v)$.
Now we argue as in \cite[Sec.~3.2]{Sab26}.
All set-stabilizers of $S$ have derived length at least $3$ \cite[Ex.~2.11]{Sab26}, and there is no $4$-coloring with trivial stabilizer (because already $\Sym(4) \wr_4 \Sym(4)$ on $16$ points has none).
These facts together imply that $\dl(H \cap H^g) \geq 7$ for all $g \in G$.
\end{example}

\vspace{0.3cm} \noindent
{\bfseries Acknowledgments:}
We thank the referee for their helpful comments,
which significantly improved the presentation of the paper.

\vspace{0.1cm}

   \vspace{0.5cm}

\end{document}